\newtheorem{theorem}{Theorem}[section]
\newtheorem{lemma}[theorem]{Lemma}
\newtheorem{prop}[theorem]{Proposition}
\newtheorem{corollary}[theorem]{Corollary}
\newtheorem{question}[theorem]{Question}
\theoremstyle{definition}
\newtheorem{definition}[theorem]{Definition}
\theoremstyle{remark}
\newtheorem{example}[theorem]{Example}
\newtheorem{remark}[theorem]{Remark}
\newtheorem*{ackn}{Acknowledgments}
\DeclareMathOperator{\Hom}{Hom}
\DeclareMathOperator{\Ker}{Ker}
\DeclareMathOperator{\Tor}{Tor}
\DeclareMathOperator{\Der}{Der}
\newcommand*{\dL}{\mathrm{L}}
\newcommand*{\lmod}{\mbox{-}\!\mathop{\mathsf{mod}}}
\newcommand*{\lbarmod}{\mbox{-}{\mathop{\underline{\mathsf{mod}}}}}
\newcommand*{\rmod}{\mathop{\mathsf{mod}}\!\mbox{-}}
\newcommand*{\bimod}{\mbox{-}\!\mathop{\mathsf{mod}}\!\mbox{-}}
\newcommand*{\Fr}{\mathsf{Fr}}
\newcommand*{\barFr}{\underline{\mathsf{Fr}}}
\newcommand*{\Vect}{\mathsf{Vect}}
\newcommand*{\Ptens}{\mathop{\widehat\otimes}}
\newcommand*{\ptens}[1]{\mathop{\widehat\otimes}_{#1}}
\newcommand*{\Lptens}[1]{\mathop{\widehat\otimes}_{#1}\nolimits^{\dL}}
\newcommand*{\tens}[1]{\mathop{\otimes}_{#1}}
\newcommand*{\id}{1}
\newcommand*{\wt}{\widetilde}
\newcommand*{\h}{\mathbf h}
\newcommand*{\CC}{\mathbb C}
\newcommand*{\Z}{\mathbb Z}
\newcommand*{\R}{\mathbb R}
\newcommand*{\DD}{\mathbb D}
\newcommand*{\cO}{\mathscr O}
\newcommand*{\cF}{\mathscr F}
\newcommand*{\cG}{\mathscr G}
\newcommand*{\cI}{\mathscr I}
\newcommand*{\cH}{\mathscr H}
\newcommand*{\cC}{\mathscr C}
\newcommand*{\fm}{\mathfrak m}
\newcommand*{\cP}{\mathscr P}
\newcommand*{\bD}{\mathsf D}
\newcommand*{\eps}{\varepsilon}
\newenvironment{mycompactenum}{\pltopsep=5pt\begin{compactenum}[\upshape (i)]}%
{\end{compactenum}}
\newcommand*{\lar}{\leftarrow}
\newcommand*{\xla}{\xleftarrow}
\newcommand*{\xra}{\xrightarrow}
\newcommand{\lriso}{\stackrel{\textstyle\sim}{\smash\longrightarrow%
\vphantom{\scriptscriptstyle{_1}}}}
\numberwithin{equation}{section}
\begin{document}
\title{Open embeddings and pseudoflat epimorphisms}

\subjclass[2010]{46M18, 46H25, 46E25, 32A38, 16E30}
\author{O. Yu. Aristov}
\address{Oleg Yu. Aristov}
\email{aristovoyu@inbox.ru}
\author{A. Yu. Pirkovskii}
\address{Alexei Yu. Pirkovskii, Faculty of Mathematics,
HSE University,
6 Usacheva, 119048 Moscow, Russia}
\email{aupirkovskii@hse.ru}
\thanks{This work was supported by the RFBR grant no. 19-01-00447.}
\dedicatory{Dedicated to Professor Alexander Ya. Helemskii on the occasion of his 75th birthday}
\date{}

\begin{abstract}
We characterize open embeddings of Stein spaces and of $C^\infty$-manifolds
in terms of certain flatness-type conditions on the respective homomorphisms
of function algebras.
\end{abstract}

\maketitle

\section{Introduction}
\label{sect:intro}

Our main motivation comes from the following fact in algebraic geometry.
If $(X,\cO_X)$ and $(Y,\cO_Y)$ are affine schemes, then a morphism
$f\colon (Y,\cO_Y)\to (X,\cO_X)$ is an open
embedding if and only if the respective homomorphism $f^\bullet\colon \cO(X)\to\cO(Y)$
is a flat epimorphism of finite presentation \cite[17.9.1]{EGA4}.
We are interested in complex analytic and smooth versions of this result.
Specifically, given a morphism $f\colon (Y,\cO_Y)\to (X,\cO_X)$ of Stein spaces,
we are looking for a condition on $f^\bullet\colon \cO(X)\to\cO(Y)$ that is necessary and
sufficient for $f$ to be an open embedding. A similar question makes sense for
$C^\infty$-manifolds. To get a reasonable answer, we equip the algebras of holomorphic
and smooth functions with their canonical Fr\'echet space topologies and consider them
as functional analytic objects \cite{X1,X2}.

It is easy to see that the above-mentioned algebraic result does not extend {\em verbatim} to
the complex analytic case. Indeed, if $U$ is an open subset of a Stein space $(X,\cO_X)$, then
$\cO(U)$ is normally not flat as a Fr\'echet $\cO(X)$-module. This observation is essentially
due to M.~Putinar \cite{Put_SSh1} (see also \cite{EschmPut})
and is closely related to the spectral theory of linear operators on Banach spaces.
Actually, if $\cO(U)$ were flat over $\cO(\CC)$ for every open subset $U\subset\CC$, then each
Banach space operator would possess Bishop's property $(\beta)$, which is not the case \cite{Bishop}.
For a direct proof of the fact that $\cO(\DD)$ is not flat over $\cO(\CC)$
(where $\DD\subset\CC$ is the open unit disc), see \cite{Pir_Stein}.

A reasonable substitute for the flatness property was introduced by J.~L.~Taylor \cite{T2}.
Given a continuous homomorphism $\varphi\colon A\to B$ of
Fr\'echet algebras,
he says that $\varphi$ is a localization if, for each Fr\'echet $B$-bimodule $M$,
the induced map of the continuous Hochschild homology $\cH_\bullet(A,M)\to\cH_\bullet(B,M)$
is an isomorphism. Taylor also proved that, if $A$ and $B$ are nuclear, then the above
condition means precisely that
(i) $\Tor_i^A(B,B)=0$ for all $i\ge 1$,
and (ii) $\Tor_0^A(B,B)\cong B$ canonically.
Homomorphisms satisfying (i) and (ii) were rediscovered several times under different names
\cite{Dicks,GL,NR,Meyer,BBK}, both in the purely algebraic and in the functional analytic
contexts (see Remark~\ref{rem:hist} for historical details).
We adopt the terminology of \cite{GL} and call such maps homological
epimorphisms. To be more precise, there are two types of homological epimorphisms
in the functional analytic setting, weak and strong homological epimorphisms.
For nuclear Fr\'echet algebras, weak homological epimorphisms are the same as
Taylor's localizations, while strong homological epimorphisms are the same as Taylor's absolute
localizations.
See Section \ref{sect:psd} for details.

The fundamental (and chronologically the first) example of a weak homological epimorphism
that is not necessarily flat is the restriction map
$\cO(\CC^n)\to\cO(U)$, where $U$ is a Stein open subset of $\CC^n$
(i.e., a domain of holomorphy).
This fact was proved by Taylor \cite[Prop. 4.3]{T2} and was the main motivation for him
to introduce weak homological epimorphisms.
The second author \cite[Theorem 3.1]{Pir_Stein} observed that the same result
holds if we replace $\CC^n$ by an arbitrary Stein manifold.
Recently, F.~Bambozzi, O.~Ben-Bassat and K.~Kremnizer \cite{BBBK}, working in
the setting of bornological algebras, proved
that the above property actually characterizes open embeddings of Stein spaces
(not only over $\CC$).

Other examples of homological epimorphisms in the functional analytic context
can be found in \cite{T2,T3,Dos_faappl,Dos_Kth,Dos_colloq,Pir_stbflat,Pir_qfree}.

In the present paper, we introduce a wider class of Fr\'echet algebra homomorphisms
$A\to B$ that we call {\em $n$-pseudoflat epimorphisms}
(where $n$ is a fixed nonnegative integer). Such homomorphisms
are defined by the conditions that
$\Tor_i^A(B,B)=0$ for all $1\le i\le n$
and $\Tor_0^A(B,B)\cong B$ canonically. For $n=1$, pseudoflat epimorphisms
were introduced by G.~M.~Bergman and W.~Dicks \cite{BerDic} in the purely algebraic
setting. They also appear naturally in \cite{Schofield,AH,Ba_Posic},
for example. As far as we know, pseudoflat epimorphisms
were not considered before in the functional analytic framework.
Our main results are Theorems~\ref{holeqcond} and~\ref{BIGTHsmman},
which characterize open embeddings of Stein spaces and of smooth manifolds in
terms of pseudoflat epimorphisms.

The paper is organized as follows. Section~\ref{sect:prelim} contains some
preliminaries from homological algebra in categories of Fr\'echet modules.
Our main reference is \cite{X1}; some facts that are missing in \cite{X1} can
be found in \cite{T1,EschmPut,Pir_qfree}. In Section~\ref{sect:psd},
we introduce $n$-pseudoflat epimorphisms of Fr\'echet algebras, give some
examples, and characterize epimorphisms, $0$-pseudoflat epimorphisms,
and $1$-pseudoflat epimorphisms in terms of noncommutative differential forms.
In particular, we show that not every Fr\'echet algebra
epimorphism is $0$-pseudoflat (in contrast to the purely algebraic case).
Our main results are contained in Sections~\ref{sect:Stein} and~\ref{sect:C-inf}.
In Section~\ref{sect:Stein}, we show that
a map $f\colon (Y,\cO_Y)\to (X,\cO_X)$ of Stein spaces is an open embedding if and only if
the respective homomorphism $f^\bullet\colon\cO(X)\to\cO(Y)$ is a $1$-pseudoflat
epimorphism. Some other equivalent homological conditions on $f^\bullet$ are also given.
This is a partial generalization of the main result of \cite{BBBK}.
However, in contrast to \cite{BBBK}, we work only over $\CC$, and we deal with
topological (rather than bornological) algebras. In Section~\ref{sect:C-inf}, we show that a similar
result holds for the algebras of $C^\infty$-functions on smooth real manifolds.
Section~\ref{sect:rem} contains some remarks and open questions related to function
algebras on Stein spaces and on $C^\infty$-differentiable spaces.

\section{Preliminaries}
\label{sect:prelim}

Throughout, all vector spaces and algebras are assumed to be over the field $\CC$
of complex numbers. All algebras are assumed to be associative and
unital.
By a {\em Fr\'echet algebra} we mean an algebra $A$ equipped with
a complete, metrizable locally convex topology
(i.e., $A$ is an algebra and a Fr\'echet space simultaneously)
such that the multiplication $A\times A\to A$ is continuous.
A {\em left Fr\'echet $A$-module} is a left $A$-module $M$ equipped with
a complete, metrizable locally convex topology in such a way that
the action $A\times M\to M$ is continuous.
We always assume that $1_A\cdot x=x$ for all $x\in M$, where $1_A$ is the identity of $A$.
Left Fr\'echet $A$-modules and their continuous morphisms form a category
denoted by $A\lmod$.
The categories $\rmod A$ and $A\bimod A$ of right
Fr\'echet $A$-modules and of Fr\'echet $A$-bimodules are defined similarly.
Note that $A\bimod A\cong A^e\lmod\cong\rmod A^e$, where $A^e=A\Ptens A^{\mathrm{op}}$,
and where $A^{\mathrm{op}}$ stands for the algebra opposite to $A$.
The space of morphisms from $M$ to $N$ in $A\lmod$
(respectively, in $\rmod A$, in $A\bimod A$) will be denoted by
${_A}\h(M,N)$ (respectively, $\h_A(M,N)$, ${_A}\h_A(M,N)$).
Given Fr\'echet algebras $A$ and $B$, we denote by $\Hom(A,B)$ the set of all
continuous algebra homomorphisms from $A$ to $B$.

If $M$ is a right Fr\'echet $A$-module and $N$
is a left Fr\'echet $A$-module, then their {\em $A$-module tensor product}
$M\ptens{A}N$ is defined to be
the quotient $(M\Ptens N)/L$, where $L\subset M\Ptens N$
is the closed linear span of all elements of the form
$x\cdot a\otimes y-x\otimes a\cdot y$
($x\in M$, $y\in N$, $a\in A$).\footnote[1]{Some authors (see, e.g.,
\cite{KV,T1,Ast,EschmPut}) define $M\ptens{A}N$ in a different way.
Actually, their $M\ptens{A}N$ is our $\Tor_0^A(M,N)$ (see below).
We adopt the definition given by M.~A.~Rieffel \cite{Rieffel} (see also \cite{X1,X2,CLM,Dales,Runde}).}
As in pure algebra, the $A$-module tensor product can be characterized
by the universal property that, for each Fr\'echet space $E$,
there is a natural bijection between the set of all
continuous $A$-balanced bilinear maps from $M\times N$ to $E$
and the set of all continuous linear maps from
$M\ptens{A}N$ to $E$.

A chain complex $C_\bullet=(C_n,d_n)_{n\in\Z}$ of Fr\'echet $A$-modules is {\em admissible} if
it splits in the category of topological vector spaces, i.e., if it has
a contracting homotopy consisting of continuous linear maps. Geometrically,
this means that $C_\bullet$ is exact, and $\Ker d_n$ is a complemented subspace
of $C_n$ for each $n$.
A left Fr\'echet $A$-module $P$ is {\em projective}
if the functor ${_A}\h(P,-)\colon A\lmod\to\Vect$
(where $\Vect$ is the category of vector spaces and linear maps)
is exact is the sense that it takes admissible sequences
of Fr\'echet $A$-modules to exact sequences of vector spaces.
Similarly, a left Fr\'echet $A$-module $F$ is {\em flat}
if the tensor product functor
$(-)\ptens{A} F\colon \rmod A\to\Vect$
is exact in the same sense as above.
It is known that every projective Fr\'echet module is flat.

A {\em projective resolution} of $M\in A\lmod$ is a pair $(P_\bullet,\eps)$
consisting of a nonnegative chain complex
$P_\bullet=(P_n,d_n)_{n\ge 0}$ in $A\lmod$ and a morphism
$\eps\colon P_0\to M$ such that the sequence
$0\lar M \xla{\eps} P_\bullet$ is an admissible complex
and such that all the modules $P_n$ ($n\ge 0$) are projective.
It is a standard fact that $A\lmod$ has {\em enough projectives},
i.e., each left Fr\'echet $A$-module has a projective resolution.
The same is true of $\rmod A$ and $A\bimod A$.
In particular, the (unnormalized)
{\em bimodule bar resolution} of $A$ \cite[Section III.2.3]{X1} looks as follows:
\begin{equation}
\label{barA}
0 \lar A \xla{\mu_A} A\Ptens A \xla{d} A\Ptens A\Ptens A \lar \cdots
\lar A^{\Ptens n} \lar \cdots
\end{equation}
Here $\mu_A$ is the multiplication map, and $d\colon A^{\Ptens 3}\to A^{\Ptens 2}$
is given by
\begin{equation}
\label{bar_d}
d(a\otimes b\otimes c)=ab\otimes c-a\otimes bc
\quad (a,b,c\in A).
\end{equation}
The explicit formula for the higher differentials $A^{\Ptens (n+1)}\to A^{\Ptens n}$
is similar [loc. cit.]; we do not need it here. The augmented complex \eqref{barA}
is a projective resolution of $A$ in $A\bimod A$.

If $M\in\rmod A$ and $N\in A\lmod$, then
the space $\Tor_n^A(M,N)$ is defined to be the $n$th
homology of the complex $P_\bullet\ptens{A} N$, where $P_\bullet$ is a projective
resolution of $M$. Equivalently, $\Tor_n^A(M,N)$ is the $n$th
homology of the complex $M\ptens{A} Q_\bullet$, where $Q_\bullet$ is a projective
resolution of $N$. The spaces $\Tor_n^A(M,N)$
do not depend on the particular choices of $P_\bullet$ and $Q_\bullet$
and have the usual functorial properties (see \cite[Section III.4.4]{X1} for details).
Note that $\Tor_n^A(M,N)$ is not necessarily Hausdorff, but the associated
Hausdorff space (i.e., the quotient of $\Tor_n^A(M,N)$ modulo the closure of zero)
is a Fr\'echet space.
If $M\in A\bimod A$, then the {\em $n$th
Hochschild homology} of $A$ with coefficients in $M$
is defined by $\cH_n(A,M)=\Tor_n^{A^e}(M,A)$.

In contrast to the purely algebraic case, $\Tor_0^A(M,N)$ is not the same as $M\ptens{A}N$.
Nevertheless, there is a natural continuous open linear surjection
\begin{equation}
\label{alpha_can}
\alpha_{M,N}\colon\Tor_0^A(M,N)\to M\ptens{A} N,
\end{equation}
whose kernel is the closure of zero in $\Tor_0^A(M,N)$
\cite[III.4.27]{X1}.
In other words, $M\ptens{A} N$ is isomorphic to the Hausdorff space associated
to $\Tor_0^A(M,N)$. Hence the following equivalences hold:
\begin{equation}
\label{Tor0-ptens}
\begin{split}
\Tor_0^A(M,N)\;\text{is Hausdorff}
&\iff\alpha_{M,N}\;\text{is injective}
\iff \alpha_{M,N}\;\text{is bijective}\\
&\iff \alpha_{M,N}\;\text{is a topological isomorphism}.
\end{split}
\end{equation}

Under some nuclearity assumptions, the derived functor $\Tor$
can be calculated with the help of exact (not necessarily admissible)
sequences of projective modules.
The following result is an easy modification
of \cite[Corollary 3.1.13]{EschmPut}
(which, in turn, goes back to \cite[Proposition 4.5]{T1}).

\begin{prop}
\label{prop:nucl_Tor}
Let $A$ be a Fr\'echet algebra, $M\in\rmod A$, and $N\in A\lmod$. Suppose that
\[
0 \lar M \lar P_0 \lar P_1 \lar \cdots \lar P_n\lar P_{n+1}
\]
is an exact sequence in $\rmod A$ such that $P_0,\ldots ,P_n$ are projective.
Assume that one of the following conditions holds:
\begin{mycompactenum}
\item
$P_0,\ldots ,P_{n+1}$ are nuclear;
\item
$A$ and $N$ are nuclear.
\end{mycompactenum}
Then for each $m=0,\ldots ,n$ the space $\Tor_m^A(M,N)$ is topologically
isomorphic to the $m$th homology of the complex
$P_\bullet\ptens{A}N$.
In particular, if either $M$ or $N$ is flat, then
the tensored sequence
\[
0 \lar M\ptens{A} N \lar P_0\ptens{A} N\lar  \cdots \lar P_{n+1}\ptens{A} N
\]
is exact.
\end{prop}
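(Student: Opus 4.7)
The plan is to run the standard double-complex comparison that underlies Taylor's original proof. First I would fix an admissible projective resolution $\eps\colon Q_\bullet\to N$ of $N$ in $A\lmod$; in case~(ii), I would arrange $Q_\bullet$ to consist of nuclear modules, using that when $A$ and $N$ are nuclear, the bar-type resolution with terms $Q_j=A^{\Ptens(j+1)}\Ptens N$ has each term a projective tensor product of nuclear Fr\'echet spaces and is therefore itself nuclear.

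Then I would form the first-quadrant double complex $C_{i,j}=P_i\ptens{A}Q_j$ with $0\le i\le n+1$ and $j\ge 0$ and analyse it in both directions. Vertical analysis: for $0\le i\le n$, projectivity of $P_i$ (hence flatness, as recalled in the preliminaries) makes the column $P_i\ptens{A}Q_\bullet$ acyclic with $H_0\cong P_i\ptens{A}N$, so the vertical spectral sequence degenerates onto the augmented bottom row $P_\bullet\ptens{A}N$ in total degrees $m\le n$. The problem thus reduces to showing that each row $P_\bullet\ptens{A}Q_j$ is exact with $H_0\cong M\ptens{A}Q_j$, for then the horizontal spectral sequence computes $\Tor_m^A(M,N)$ via the complex $M\ptens{A}Q_\bullet$.

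The row exactness is the step at which nuclearity enters. Writing each projective $Q_j$ as a retract of a free module $A\Ptens E_j$ exhibits the $j$th row as a retract of the Fr\'echet-space complex obtained by applying $(-)\Ptens E_j$ to the original sequence $0\to M\to P_0\to\cdots\to P_{n+1}$. By the open mapping theorem, this sequence is strictly exact and splits into strict short exact sequences $0\to K_i\to P_i\to K_{i+1}\to 0$ of Fr\'echet spaces. Under~(i), each $P_i$, and hence each $K_i$, is nuclear, and tensoring an exact sequence of nuclear Fr\'echet spaces by an arbitrary Fr\'echet space preserves exactness. Under~(ii), the resolution $Q_\bullet$ is chosen so that each $E_j$ is nuclear, and tensoring by a nuclear Fr\'echet space is exact on every exact sequence of Fr\'echet spaces. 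Since a retract of an exact complex is exact, the required row exactness follows in both cases.

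Combining the two spectral sequences then yields a topological isomorphism $\Tor_m^A(M,N)\cong H_m(P_\bullet\ptens{A}N)$ for $0\le m\le n$, and the final \emph{in particular} clause is immediate: flatness of $M$ or $N$ kills $\Tor_m^A(M,N)$ for $m\ge 1$ and identifies $\Tor_0^A(M,N)$ with $M\ptens{A}N$. I expect the main obstacles to be technical: (a)~promoting the algebraic isomorphisms to topological ones by tracking closedness of images and invoking the open mapping theorem uniformly, and (b)~in case~(ii), verifying carefully that the nuclearity hypothesis on $A$ and $N$ really suffices to produce both a nuclear admissible projective resolution of $N$ and the nuclear Fr\'echet spaces $E_j$ demanded by the row argument.
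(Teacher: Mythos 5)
The paper gives no proof of this proposition, deferring to \cite[Corollary 3.1.13]{EschmPut} (and ultimately \cite[Proposition 4.5]{T1}), and your double-complex argument --- columns acyclic by projectivity, hence flatness, of $P_0,\dots,P_n$; rows handled by writing $Q_j$ as a retract of $A\Ptens E_j$, splitting the given exact sequence into short strict exact sequences of Fr\'echet spaces via the kernels $K_i$, and invoking nuclearity of either the $P_i$ (hence of the $K_i$ and $M$) or of $E_j$ --- is precisely the standard proof underlying those references. The one step you flag but leave open, upgrading the algebraic identifications to topological ones, is genuinely the delicate part (the spaces $\Tor_m^A(M,N)$ may be non-Hausdorff, so a continuous bijection between them is not automatically a topological isomorphism and the open mapping theorem does not apply directly), but this is dealt with in the cited sources and does not invalidate your outline.
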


Given a Fr\'echet algebra $A$ and a Fr\'echet $A$-bimodule $M$,
we let $\Der(A,M)$ denote the space of all continuous derivations of
$A$ with values in $M$. The bimodule of {\em noncommutative differential
$1$-forms over $A$} is a Fr\'echet $A$-bimodule $\Omega^1 A$ together with a
derivation $d_A\colon A\to\Omega^1 A$ such that for each Fr\'echet $A$-bimodule $M$
and each derivation $D\colon A\to M$ there exists a unique $A$-bimodule
morphism $\Omega^1 A\to M$ making the following diagram commute:
\[
\xymatrix{
\Omega^1 A \ar[r] & M\\
A \ar[u]^{d_A} \ar[ur]_D
}
\]
In other words, we have a natural isomorphism
\[
{_A}\h_A(\Omega^1 A,M)\cong\Der(A,M) \qquad (M\in A\bimod A).
\]
It is a standard fact (see, e.g., \cite{CQ,Pir_qfree}) that $\Omega^1 A$ exists
and is isomorphic to the kernel of the multiplication map
$\mu_A\colon A\Ptens A\to A$. Under the above identification, the universal
derivation $d_A\colon A\to \Omega^1 A$ acts by the rule
$d_A(a)=1\otimes a-a\otimes 1$ ($a\in A$). Thus we have an exact sequence
\begin{equation}
\label{Omega}
0 \to \Omega^1 A \xra{j_A} A\Ptens A \xra{\mu_A} A \to 0
\end{equation}
in $A\bimod A$, where $j_A$ is uniquely determined by $j_A(d_A(a))=1\otimes a-a\otimes 1$
($a\in A$). Note that \eqref{Omega} splits in $A\lmod$ and in $\rmod A$
(\cite{Pir_qfree}, cf. also \cite{CQ}). In particular, \eqref{Omega} is admissible.

\section{Pseudoflat epimorphisms}
\label{sect:psd}

We begin this section with the following ``truncated'' version of the transversality relation
$\perp_A$ introduced in \cite{KV} (see also \cite{Ast,EschmPut,Demailly}).

\begin{prop}
\label{pstranschar}
Let $A$ be a Fr\'{e}chet algebra, $M\in \rmod A$, $N\in
A\lmod$, and $n\in \Z_+$. Then the following conditions are equivalent:
\begin{mycompactenum}
\item
$\Tor_m^A(M,N) = 0$ for $1\le m\le n$, and $\Tor_0^A(M,N)$ is
Hausdorff;
\item
for some (or, equivalently, for each)  projective resolution
$0 \lar M \lar P_\bullet$ in $\rmod A$ the sequence
\begin{equation}
\label{PN}
 0 \lar M\ptens{A} N \lar P_0\ptens{A}  N \lar \cdots \lar  P_{n+1}\ptens{A} N
\end{equation}
is exact;
\item
for some (or, equivalently, for each)  projective resolution
$0 \lar N \lar Q_\bullet$ in $A\lmod$ the sequence
\begin{equation*}
0 \lar M\ptens{A} N \lar M\ptens{A}  Q_0  \lar \cdots \lar  M\ptens{A}
Q_{n+1}
\end{equation*}
is exact;
\item
for some (or, equivalently, for each)  projective resolution
$0 \lar A \lar L_\bullet$ in $A\bimod A$ the sequence
\begin{equation}
\label{MLN}
0 \lar M\ptens{A} N \lar M\ptens{A} L_0\ptens{A} N   \lar \cdots
 \lar M\ptens{A} L_{n+1}\ptens{A} N
\end{equation}
is exact.
\end{mycompactenum}
\end{prop}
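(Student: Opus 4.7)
The plan is to unwind everything from the definition of $\Tor_\bullet^A(M,N)$ as the derived functor of the tensor product, together with the characterization of Hausdorffness of $\Tor_0$ via the canonical map $\alpha_{M,N}$ recorded in \eqref{Tor0-ptens}. Conditions (ii), (iii), (iv) express the same vanishing of $\Tor$ through projective resolutions of $M$, of $N$, and of $A$ as a bimodule, so I would argue (i)$\iff$(ii), then (i)$\iff$(iii) symmetrically, and finally reduce (i)$\iff$(iv) to (iii).

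For (i)$\iff$(ii) I would take any projective resolution $P_\bullet \to M$ in $\rmod A$ and inspect the sequence \eqref{PN}. By definition $\Tor_m^A(M,N)=H_m(P_\bullet \ptens{A} N)$, so exactness at $P_m \ptens{A} N$ for $1 \le m \le n$ is precisely the vanishing of $\Tor_1^A(M,N),\dots,\Tor_n^A(M,N)$. Exactness at $M \ptens{A} N$ is automatic, since $(-) \ptens{A} N$ is right exact on admissible sequences and $P_0 \to M$ is an admissible surjection. The delicate spot is $P_0 \ptens{A} N$: exactness there amounts to injectivity of the induced map from $\Tor_0^A(M,N)=\mathrm{coker}(P_1 \ptens{A} N \to P_0 \ptens{A} N)$ to $M \ptens{A} N$. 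This induced map is precisely $\alpha_{M,N}$, so by \eqref{Tor0-ptens} its injectivity is equivalent to Hausdorffness of $\Tor_0^A(M,N)$. The ``some $\iff$ each'' clause comes from the fact that any two projective resolutions in $\rmod A$ are $A$-linearly chain-homotopy equivalent, and such an equivalence is preserved by $(-) \ptens{A} N$. The equivalence (i)$\iff$(iii) then follows by the symmetric argument using a projective resolution $Q_\bullet \to N$ in $A\lmod$.

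For (i)$\iff$(iv) I would reduce to (iii) by showing that, for any projective bimodule resolution $L_\bullet \to A$, the complex $L_\bullet \ptens{A} N \to N$ is itself a projective resolution of $N$ in $A\lmod$; then \eqref{MLN} becomes the sequence of (iii) for $Q_\bullet=L_\bullet \ptens{A} N$. Projectivity of each $L_n \ptens{A} N$ is easy: projective bimodules are retracts of free bimodules $A \Ptens E \Ptens A$, and tensoring the free case with $N$ on the right over $A$ yields $A \Ptens (E \Ptens N)$, which is free as a left $A$-module. Admissibility of the augmented complex is the delicate point. I would first check it directly for the bar resolution \eqref{barA}, where the standard right $A$-linear contracting homotopy $a_0 \otimes \cdots \otimes a_{n+1} \mapsto 1 \otimes a_0 \otimes \cdots \otimes a_{n+1}$ descends through $(-) \ptens{A} N$ to a continuous $\CC$-linear contracting homotopy. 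For an arbitrary $L_\bullet$, the comparison theorem in $A\bimod A$ supplies an $A$-bilinear chain-homotopy equivalence with \eqref{barA}; applying $(-) \ptens{A} N$ produces a chain-homotopy equivalence at the level of Fr\'echet spaces, and admissibility transports along any such equivalence by a direct conjugation of contracting homotopies.

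The main obstacle is exactly this admissibility step in (iv): ensuring that a projective bimodule resolution remains admissible after one-sided tensoring with an arbitrary left $A$-module $N$. The route outlined isolates the nontrivial content into the explicit contracting homotopy for \eqref{barA} and the transport of admissibility along chain-homotopy equivalence at the topological-vector-space level, both of which are standard. The remainder of the proposition is then formal from the definition of $\Tor$ and the relationship between $\Tor_0^A(M,N)$ and $M \ptens{A} N$ encoded in \eqref{Tor0-ptens}.
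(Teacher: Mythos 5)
Your proposal is correct and follows essentially the same route as the paper: (i)$\iff$(ii) by inspecting exactness of \eqref{PN} degree by degree and identifying exactness at $P_0\ptens{A}N$ with injectivity of $\alpha_{M,N}$ via \eqref{Tor0-ptens}, (i)$\iff$(iii) by symmetry, and (iii)$\iff$(iv) by observing that $0\leftarrow N\leftarrow L_\bullet\ptens{A}N$ is a projective resolution of $N$ in $A\lmod$. The only difference is that you spell out the projectivity and admissibility of $L_\bullet\ptens{A}N$ (via the bar resolution and transport of contractibility along chain-homotopy equivalences), which the paper simply asserts as standard.
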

\begin{proof}
The equivalences between ``for some'' and ``for each'' in (ii)--(iv) are immediate from
the fact that all projective resolutions of a module are homotopy equivalent.

$\mathrm{(i)}\Longleftrightarrow\mathrm{(ii)}$. Since $\ptens{A}$ preserves surjections
\cite[II.4.12]{X1}, \eqref{PN} is always exact at $M\ptens{A} N$.
If $1\le m\le n$, then \eqref{PN} is exact at $P_m\ptens{A}N$ if and only if
$\Tor_m^A(M,N)=0$. On the other hand, \eqref{PN} is exact at $P_0\ptens{A}N$ if and only if
\[
\Ker(P_0\ptens{A} N\to\Tor_0^A(M,N))=\Ker(P_0\ptens{A} N\to M\ptens{A} N),
\]
i.e., if and only if the canonical map $\Tor_0^A(M,N)\to M\ptens{A} N$ is injective.
By \eqref{Tor0-ptens}, the latter condition holds if and only if $\Tor_0^A(M,N)$ is Hausdorff.

$\mathrm{(i)}\Longleftrightarrow\mathrm{(iii)}$. This is similar to $\mathrm{(i)}\iff\mathrm{(ii)}$.

$\mathrm{(iii)}\Longleftrightarrow\mathrm{(iv)}$.
If $0\lar A\lar L_\bullet$ is a projective resolution of $A$ in $A\bimod A$,
then $0\lar N \lar L_\bullet\ptens{A} N$ is a projective resolution of $N$
in $A\lmod$. The rest is clear.
\end{proof}

\begin{definition}
\label{ntrans}
Let $A$ be a Fr\'{e}chet algebra, $M\in \rmod A$,
$N\in A\lmod$, and $n\in \Z_+$. We say that $M$ and $N$ are
{\em $n$-transversal} over $A$ (and write $M\perp^n_A N$) if the (equivalent)
conditions of Proposition~\ref{pstranschar} are satisfied.
If $M\perp^n_A N$ for all $n\in\Z_+$, then $M$ and $N$
are said to be {\em transversal} \cite{KV}
(see also \cite{Ast,EschmPut,Demailly}). In this case, we write $M\perp_A N$.
\end{definition}

\begin{corollary}
Let $A$ be a Fr\'{e}chet algebra, $M\in \rmod A$, $N\in
A\lmod$, and $n\in \Z_+$. Then the following conditions are equivalent:
\begin{mycompactenum}
\item
$M\perp^n_A N$;
\item
$(N\Ptens M)\perp^n_{A^e} A$;
\item
$\cH_m(A,N\Ptens M) = 0$ for $1\le m\le n$, and $\cH_0(A,N\Ptens M)$ is
Hausdorff.
\end{mycompactenum}
\end{corollary}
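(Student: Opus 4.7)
The plan is to establish the two equivalences $(\mathrm{i})\iff(\mathrm{iii})$ and $(\mathrm{i})\iff(\mathrm{ii})$ separately, treating $(\mathrm{ii})\iff(\mathrm{iii})$ as essentially a direct consequence of definitions.

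For $(\mathrm{ii})\iff(\mathrm{iii})$, I would simply apply Proposition~\ref{pstranschar}(i) with the Fr\'echet algebra $A^e$, the right $A^e$-module $N\Ptens M$, and the left $A^e$-module $A$. Under the definition $\cH_m(A,N\Ptens M)=\Tor_m^{A^e}(N\Ptens M,A)$ stated in Section~\ref{sect:prelim}, condition~(iii) translates word for word into $(N\Ptens M)\perp^n_{A^e}A$.

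For $(\mathrm{i})\iff(\mathrm{ii})$, the idea is to compare the two sequences produced by Proposition~\ref{pstranschar} using a single projective resolution $0\lar A\lar L_\bullet$ of $A$ in $A\bimod A$. By Proposition~\ref{pstranschar}(iv), condition~(i) is equivalent to the exactness of the sequence
\[
0 \lar M\ptens{A} N \lar M\ptens{A} L_0\ptens{A} N   \lar \cdots \lar M\ptens{A} L_{n+1}\ptens{A} N.
\]
On the other hand, since $A\bimod A\cong A^e\lmod$, the same $L_\bullet$ is a projective resolution of $A$ in $A^e\lmod$; hence by Proposition~\ref{pstranschar}(iii) applied to the algebra $A^e$ and the modules $N\Ptens M\in\rmod A^e$, $A\in A^e\lmod$, condition~(ii) is equivalent to the exactness of
\[
0\lar (N\Ptens M)\ptens{A^e} A\lar (N\Ptens M)\ptens{A^e} L_0\lar\cdots\lar (N\Ptens M)\ptens{A^e} L_{n+1}.
\]
The key step is then to exhibit a natural isomorphism
\[
\tau_X\colon M\ptens{A} X\ptens{A} N\;\lriso\;(N\Ptens M)\ptens{A^e} X\qquad (X\in A\bimod A)
\]
that intertwines the differentials of the two complexes (and identifies the zeroth terms via $X=A$). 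The construction uses the cyclic permutation $m\otimes x\otimes n\mapsto (n\otimes m)\otimes x$ on $M\Ptens X\Ptens N$: under the $A^e$-bimodule structure $(a\otimes b^{\mathrm{op}})\cdot x=axb$ on $X$ and $(n\otimes m)\cdot(a\otimes b^{\mathrm{op}})=bn\otimes ma$ on $N\Ptens M$, the defining relations on both sides match up, and naturality in $X$ follows from the universal property of the module tensor product reviewed in Section~\ref{sect:prelim}.

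The main obstacle is carrying out the verification of $\tau_X$: one has to check that the cyclic permutation is well defined on the relevant Hausdorff quotients, that it is a topological isomorphism, and that it commutes with the differentials induced by $L_\bullet\to L_{\bullet-1}$. All of this is a routine but somewhat fiddly bookkeeping exercise, handled most efficiently by appealing to the universal properties of $\ptens{A}$ and $\ptens{A^e}$ rather than by manipulating elementary tensors directly.
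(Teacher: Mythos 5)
Your proposal is correct and amounts to essentially the same argument as the paper's: the entire content of the corollary is the isomorphism $\Tor_m^A(M,N)\cong\cH_m(A,N\Ptens M)=\Tor_m^{A^e}(N\Ptens M,A)$, which the paper simply quotes from Helemskii \cite[III.4.25]{X1} and combines with Proposition~\ref{pstranschar}, whereas you reconstruct that isomorphism by hand via the cyclic-permutation identification $M\ptens{A}X\ptens{A}N\cong(N\Ptens M)\ptens{A^e}X$ applied termwise to a bimodule resolution $L_\bullet$. Your verification plan (well-definedness on the quotients, naturality in $X$, compatibility with the differentials and with the augmentation term $X=A$) is sound and is exactly the standard proof of the cited fact, so the only difference is that you prove what the paper cites.
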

\begin{proof}
This is immediate from Proposition \ref{pstranschar} and from the
isomorphisms $\Tor_m^A(M,N)\cong\cH_m(A,N\Ptens M)$ \cite[III.4.25]{X1}.
\end{proof}

Here is our main definition.

\begin{definition}
\label{def:pseudoflat}
Let $\varphi\colon A\to B$ be a Fr\'echet algebra homomorphism, and let $n\in\Z_+$.
We say that $\varphi$ is {\em $n$-pseudoflat} if $B\perp^n_A B$.
\end{definition}

We are mostly interested in those pseudoflat homomorphisms which are epimorphisms
(in the category-theoretic sense).
Recall that a morphism $\varphi\colon A\to B$ in a category $\cC$ is an
{\em epimorphism} if for each pair $\psi,\psi'\colon B\to C$ of morphisms in $\cC$
the equality $\psi\varphi=\psi'\varphi$ implies that $\psi=\psi'$.
Equivalently, this means that, for each object $C$ of $\cC$, the map
$\Hom_{\cC}(B,C)\to\Hom_{\cC}(A,C)$ induced by $\varphi$ is injective.

For the reader's convenience, let us recall the following
result (see, e.g., \cite[Prop.~XI.1.2]{St}, \cite[Prop. 6.1]{Pir_qfree}).

\begin{prop}\label{charepi}
Let $\varphi\colon A\to B$ be a homomorphism of Fr\'echet algebras. Then the
following conditions are equivalent:
\begin{mycompactenum}
\item $\varphi$ is an epimorphism in the category of Fr\'echet algebras;
\item  the multiplication map $\mu_{B,A}\colon B\ptens{A}B \to B$ is a topological isomorphism;
\item for each $M\in \rmod B$ and each $N\in B\lmod$,
the canonical map $M \ptens{A} N \to M \ptens{B} N$ is a topological
isomorphism.
\end{mycompactenum}
\end{prop}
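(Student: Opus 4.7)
The plan is to prove the cycle (iii) $\Rightarrow$ (ii) $\Rightarrow$ (i) $\Rightarrow$ (ii) $\Rightarrow$ (iii), of which the first and third arrows are essentially formal. Indeed, (iii) $\Rightarrow$ (ii) is just the specialization $M = N = B$: the standard isomorphism $B\ptens{B}B \cong B$ via $\mu_B$ identifies the canonical map $B\ptens{A}B \to B\ptens{B}B$ with $\mu_{B,A}$. For (ii) $\Rightarrow$ (i), if $\psi,\psi'\colon B\to C$ agree after precomposition with $\varphi$, then $(b,b')\mapsto\psi(b)\psi'(b')$ is a continuous $A$-balanced bilinear map, so it descends to a continuous linear $\Psi\colon B\ptens{A}B \to C$ with $\Psi(b\otimes 1)=\psi(b)$ and $\Psi(1\otimes b)=\psi'(b)$. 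Injectivity of $\mu_{B,A}$ (implied by (ii)) forces $b\otimes 1 = 1\otimes b$ in $B\ptens{A}B$, hence $\psi=\psi'$.

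The main step is (i) $\Rightarrow$ (ii), which I would handle via the triangular matrix construction. Equip $\Omega := B\ptens{A}B$ with its natural $B$-bimodule structure ($b\cdot(x\otimes y)=bx\otimes y$, $(x\otimes y)\cdot b = x\otimes yb$) and form the Fr\'echet algebra
\[
T = B \oplus \Omega \oplus B, \qquad (b_1,\omega,b_2)\cdot(b_1',\omega',b_2') = (b_1 b_1',\; b_1\cdot\omega' + \omega\cdot b_2',\; b_2 b_2'),
\]
i.e., the upper-triangular $2\times 2$ matrix algebra with off-diagonal entries in $\Omega$. A short Leibniz calculation, combined with the $A$-balance identity $\varphi(a)\otimes 1 = 1\otimes\varphi(a)$ in $\Omega$, shows that $d\colon B\to\Omega$, $d(b)=b\otimes 1 - 1\otimes b$, is a continuous derivation vanishing on $\varphi(A)$. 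Hence the two continuous homomorphisms $\psi_1(b)=(b,0,b)$ and $\psi_2(b)=(b,d(b),b)$ agree after composition with $\varphi$, and (i) forces $\psi_1=\psi_2$, i.e., $d\equiv 0$. Therefore $b\otimes 1=1\otimes b$ in $B\ptens{A}B$ for every $b\in B$; the continuous map $\sigma\colon B\to B\ptens{A}B$, $b\mapsto 1\otimes b$, then satisfies $\mu_{B,A}\sigma=\mathrm{id}_B$ and $\sigma\mu_{B,A}(b\otimes b')=1\otimes bb'=(1\otimes b)\cdot b'=(b\otimes 1)\cdot b'=b\otimes b'$, so $\mu_{B,A}$ is a topological isomorphism with inverse $\sigma$.

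Finally, for (ii) $\Rightarrow$ (iii), the idea is a standard associativity computation: for every $M\in\rmod B$ and $N\in B\lmod$, the universal property of the module tensor product gives a natural topological isomorphism $M\ptens{A}N\cong M\ptens{B}(B\ptens{A}N)$, with inverse induced by $m\otimes(b\otimes n)\mapsto mb\otimes n$. Writing $N\cong B\ptens{B}N$ and applying the same principle internally yields $B\ptens{A}N\cong (B\ptens{A}B)\ptens{B}N$, and the hypothesis $\mu_{B,A}\colon B\ptens{A}B\cong B$ then converts the right side into $N$. Composing, one obtains the sought isomorphism $M\ptens{A}N\cong M\ptens{B}N$, and unwinding the definitions shows it is the canonical map. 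The main obstacle is the derivation argument used in (i) $\Rightarrow$ (ii): one must ensure that the natural $B$-bimodule structure on $B\ptens{A}B$ is jointly continuous so that $T$ is genuinely a Fr\'echet algebra, and that $d$ is genuinely a continuous derivation---both are straightforward but indispensable.
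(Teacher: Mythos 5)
Your proof is correct. The paper itself does not prove Proposition~\ref{charepi} --- it only cites \cite{St} and \cite{Pir_qfree} --- but your argument is exactly the standard one from those sources, and your key step for (i)$\Rightarrow$(ii) (killing the derivation $b\mapsto b\otimes 1-1\otimes b$ by mapping into a square-zero-type extension) is precisely the mechanism the authors themselves deploy later in the proof of Theorem~\ref{epicharde}; your triangular algebra $T$ is a mild repackaging of their $B\oplus X$ with multiplication $(b,x)(c,y)=(bc,by+xc)$, since the homomorphisms you compare land in the subalgebra $\{(b,\omega,b)\}\cong B\oplus\Omega$. The remaining implications ((iii)$\Rightarrow$(ii) by specialization, (ii)$\Rightarrow$(i) via the balanced map $(b,b')\mapsto\psi(b)\psi'(b')$, and (ii)$\Rightarrow$(iii) via associativity of $\ptens{}$) are all sound as written.
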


We also need the following well-known fact (see, e.g., \cite[Chap. 0, Corollary 4.2]{X1}).

\begin{prop}
\label{prop:epi_mod}
Let $A$ be a Fr\'echet algebra, and let $\varphi\colon M\to N$ be a morphism
of left Fr\'echet $A$-modules. Then $\varphi$ is an epimorphism in $A\lmod$
if and only if $\varphi(M)$ is dense in $N$. The same result holds
for the categories $\rmod A$ and $A\bimod A$.
\end{prop}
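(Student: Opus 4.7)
The plan is to handle the two directions separately, with the forward direction (``$\varphi(M)$ dense $\Rightarrow$ $\varphi$ is epic'') being an immediate consequence of the continuity of morphisms in $A\lmod$, and the reverse direction being proved contrapositively by constructing a pair of distinct morphisms that coagulate on $\varphi(M)$.

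First I would prove the easy direction. Suppose $\varphi(M)$ is dense in $N$, and let $\psi,\psi'\colon N\to P$ be two morphisms in $A\lmod$ with $\psi\varphi=\psi'\varphi$. Then $\psi$ and $\psi'$ agree on $\varphi(M)$; but both are continuous (morphisms in $A\lmod$ are by definition continuous $A$-module maps between Fr\'echet modules), so they must coincide on the closure $\overline{\varphi(M)}=N$. Hence $\psi=\psi'$, and $\varphi$ is an epimorphism.

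For the converse, I would argue by contraposition. Suppose $\varphi(M)$ is \emph{not} dense in $N$, and let $L=\overline{\varphi(M)}\subset N$ be its closure. Because the action $A\times N\to N$ is (jointly) continuous, $L$ is a closed $A$-submodule of $N$, so the quotient space $P:=N/L$ carries a canonical structure of a left Fr\'echet $A$-module (it is a Fr\'echet space because $L$ is closed in the Fr\'echet space $N$, and the continuous $A$-action descends to the quotient). Since $\varphi(M)$ is not dense, $L\ne N$, so $P\ne 0$. Take $\psi=\pi\colon N\to P$ to be the quotient map and $\psi'=0\colon N\to P$; both are morphisms in $A\lmod$, both satisfy $\psi\varphi=\psi'\varphi=0$ (since $\varphi(M)\subset L$), yet $\psi\ne\psi'$. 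Thus $\varphi$ is not an epimorphism, completing the proof for $A\lmod$.

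The bimodule and right-module cases go through by literally the same argument: the closure of the image is again a sub-bimodule (resp.\ right submodule) by joint continuity of the two-sided (resp.\ right) action, and the quotient is a nonzero Fr\'echet bimodule (resp.\ right module) on which the quotient map differs from the zero map. There is no real obstacle here beyond noting that the closure of a submodule is a submodule in each of the three categories; this is where joint continuity of the action is essential and is the only step where the topological nature of Fr\'echet modules is used in a nontrivial way.
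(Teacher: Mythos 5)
Your proof is correct. The paper gives no argument for this proposition, citing only \cite[Chap.~0, Corollary 4.2]{X1}; your reasoning --- continuity of morphisms for the ``dense $\Rightarrow$ epic'' direction, and for the converse the quotient of $N$ by the closed submodule $\overline{\varphi(M)}$ together with the pair consisting of the quotient map and the zero map --- is exactly the standard argument behind that citation.
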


Given a Fr\'echet algebra homomorphism $\varphi\colon A\to B$, we
define $\bar\mu_{B,A}\colon \Tor_0^A(B,B)\to B$ to be the composition of the
canonical map $\alpha=\alpha_{B,B}\colon\Tor_0^A(B,B)\to B\ptens{A} B$ (see \eqref{alpha_can})
and the multiplication map $\mu_{B,A}\colon B\ptens{A} B\to B$.

\begin{lemma}
\label{lemma:L1}
The following conditions are equivalent:
\begin{mycompactenum}
\item
$\varphi$ is a $0$-pseudoflat epimorphism;
\item
$\bar\mu_{B,A}\colon \Tor_0^A(B,B)\to B$ is bijective;
\item
$\bar\mu_{B,A}\colon \Tor_0^A(B,B)\to B$ is a topological isomorphism.
\end{mycompactenum}
\end{lemma}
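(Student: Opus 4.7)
The plan is to reduce everything to two pieces of machinery already available in the excerpt: the chain of equivalences \eqref{Tor0-ptens}, which says that $\alpha_{B,B}\colon\Tor_0^A(B,B)\to B\ptens{A}B$ is a topological isomorphism precisely when $\Tor_0^A(B,B)$ is Hausdorff, and Proposition~\ref{charepi}, which says that $\varphi$ is an epimorphism precisely when $\mu_{B,A}\colon B\ptens{A}B\to B$ is a topological isomorphism. I would also use that, by Proposition~\ref{pstranschar} and Definition~\ref{def:pseudoflat} specialized to $n=0$, $\varphi$ is $0$-pseudoflat iff $\Tor_0^A(B,B)$ is Hausdorff. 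The factorization $\bar\mu_{B,A}=\mu_{B,A}\circ\alpha_{B,B}$ then does all the work.

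For (i)$\Rightarrow$(iii) I would just observe that $0$-pseudoflatness makes $\alpha_{B,B}$ a topological isomorphism and being an epimorphism makes $\mu_{B,A}$ a topological isomorphism, so their composition $\bar\mu_{B,A}$ is one as well; the implication (iii)$\Rightarrow$(ii) is trivial. The substantive direction is (ii)$\Rightarrow$(i), which will simultaneously upgrade (ii) to (iii). Assume $\bar\mu_{B,A}=\mu_{B,A}\circ\alpha_{B,B}$ is bijective. Since $\alpha_{B,B}$ is always surjective, injectivity of $\bar\mu_{B,A}$ forces $\alpha_{B,B}$ to be injective as well, hence bijective; by \eqref{Tor0-ptens} it is then a topological isomorphism, and in particular $\Tor_0^A(B,B)$ is Hausdorff, i.e.\ $\varphi$ is $0$-pseudoflat. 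Consequently $\mu_{B,A}=\bar\mu_{B,A}\circ\alpha_{B,B}^{-1}$ is a continuous linear bijection between the Fr\'echet spaces $B\ptens{A}B$ and $B$; the open mapping theorem promotes it to a topological isomorphism, and Proposition~\ref{charepi} then tells us $\varphi$ is an epimorphism. Composing topological isomorphisms yields (iii).

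The only place requiring even minor care—and it is the main, very mild obstacle—is the appeal to the open mapping theorem, needed to pass from bare bijectivity of $\bar\mu_{B,A}$ to topological bijectivity of $\mu_{B,A}$, so as to invoke the \emph{topological} form of Proposition~\ref{charepi}. Without that step one would only extract an algebraic statement, not the full equivalence with epimorphism status.
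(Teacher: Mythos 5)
Your proposal is correct and follows essentially the same route as the paper: the factorization $\bar\mu_{B,A}=\mu_{B,A}\circ\alpha_{B,B}$, the equivalences \eqref{Tor0-ptens}, Proposition~\ref{charepi}, and the Open Mapping Theorem in the direction (ii)$\Rightarrow$(i). The only cosmetic difference is that the paper deduces Hausdorffness of $\Tor_0^A(B,B)$ directly from the continuous injection $\bar\mu_{B,A}$ into $B$, whereas you first extract injectivity of $\alpha_{B,B}$; both are immediate consequences of the same facts.
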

\begin{proof}
$\mathrm{(i)}\Longrightarrow\mathrm{(iii)}$.
The fact that $\varphi$ is $0$-pseudoflat means precisely that $\Tor_0^A(B,B)$ is
Hausdorff, which happens if and only if $\alpha\colon\Tor_0^A(B,B)\to B\ptens{A} B$
is a topological isomorphism (see \eqref{Tor0-ptens}).
On the other hand, the fact that $\varphi$ is an epimorphism
means precisely that $\mu_{B,A}\colon B\ptens{A} B\to B$ is a topological isomorphism
(see Proposition~\ref{charepi}). Hence so is $\bar\mu_{B,A}=\mu_{B,A}\circ\alpha$.

$\mathrm{(iii)}\Longrightarrow\mathrm{(ii)}$. This is clear.

$\mathrm{(ii)}\Longrightarrow\mathrm{(i)}$.
Since $\bar\mu_{B,A}$ is continuous and bijective, we conclude that $\Tor_0^A(B,B)$
is Hausdorff (i.e., $\varphi$ is $0$-pseudoflat).
Hence $\alpha$ is bijective by \eqref{Tor0-ptens}, and so $\mu_{B,A}$ is a topological
isomorphism by the Open Mapping Theorem.
Applying Proposition~\ref{charepi}, we see that $\varphi$ is an epimorphism.
\end{proof}

Let $\varphi\colon A\to B$ be a Fr\'echet algebra homomorphism,
and let $0 \lar A \lar L_\bullet$ be a projective
resolution in $A\bimod A$.
Applying $B\ptens{A} (-)\ptens{A} B$ to
$L_0\to A$ and composing with the multiplication $\mu_{B,A}\colon B\ptens{A} B\to B$,
we get a $B$-bimodule morphism $\eps_L\colon B\ptens{A} L_0\ptens{A} B\to B$.
Similarly, if $0\lar B \lar P_\bullet$ and $0\lar B \lar Q_\bullet$ are projective resolutions
of $B$ in $\rmod A$ and in $A\lmod$, respectively, then we have morphisms
$\eps_P\colon P_0\ptens{A} B\to B$ in $\rmod B$ and $\eps_Q\colon B\ptens{A} Q_0\to B$ in $B\lmod$.

\begin{prop}
\label{psflepi}
Let $\varphi\colon A\to B$ be a Fr\'echet algebra homomorphism.
The following conditions are equivalent:
\begin{mycompactenum}
\item
$\varphi$ is an $n$-pseudoflat epimorphism;
\item
for some (or, equivalently, for each)  projective resolution
$0 \lar B \lar P_\bullet$ in $\rmod A$ the sequence
\begin{equation}
\label{PB}
 0 \lar B \xla{\eps_P} P_0\ptens{A}  B \lar \cdots \lar  P_{n+1}\ptens{A} B
\end{equation}
is exact;
\item
for some (or, equivalently, for each)  projective resolution
$0 \lar B \lar Q_\bullet$ in $A\lmod$ the sequence
\begin{equation*}
0 \lar B \xla{\eps_Q} B\ptens{A}  Q_0  \lar \cdots \lar  B\ptens{A} Q_{n+1}
\end{equation*}
is exact;
\item
for some (or, equivalently, for each)  projective resolution
$0 \lar A \lar L_\bullet$ in $A\bimod A$ the sequence
\begin{equation}
\label{qflres}
0 \lar B \xla{\eps_L} B\ptens{A} L_0\ptens{A} B   \lar \cdots
 \lar B\ptens{A} L_{n+1}\ptens{A} B
\end{equation}
is exact.
\end{mycompactenum}
\end{prop}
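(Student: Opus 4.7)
The plan is to reduce Proposition~\ref{psflepi} to Proposition~\ref{pstranschar} applied with $M=N=B$, bridging the two characterizations via Proposition~\ref{charepi} and Lemma~\ref{lemma:L1}. The sequences appearing in Proposition~\ref{pstranschar}\,(ii)--(iv) all begin with $B\ptens{A}B$, whereas those of Proposition~\ref{psflepi} begin with $B$; the passage between them is mediated by the multiplication $\mu_{B,A}\colon B\ptens{A}B\to B$, which by Proposition~\ref{charepi} is a topological isomorphism precisely when $\varphi$ is an epimorphism. In addition, the ``for some/for each'' equivalences in (ii)--(iv) follow, as in the proof of Proposition~\ref{pstranschar}, from the homotopy uniqueness of projective resolutions.

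For $\mathrm{(i)}\Longrightarrow\mathrm{(ii)}$ I would apply Proposition~\ref{pstranschar}\,(ii) with $M=N=B$ to conclude that
\[
0\lar B\ptens{A}B \xla{\iota} P_0\ptens{A}B \lar \cdots \lar P_{n+1}\ptens{A}B
\]
is exact, where $\iota=\eps\otimes 1_B$ and $\eps\colon P_0\to B$ is the augmentation. Since $\varphi$ is an epimorphism, $\mu_{B,A}$ is an isomorphism (Proposition~\ref{charepi}), and because $\eps_P=\mu_{B,A}\circ\iota$, the sequence \eqref{PB} is exact. Conversely, suppose \eqref{PB} is exact. Exactness at $P_m\ptens{A}B$ for $1\le m\le n$ yields $\Tor_m^A(B,B)=0$ at once. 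Writing $\pi_0\colon P_0\ptens{A}B\to\Tor_0^A(B,B)$ for the canonical quotient (whose kernel is the image of $d_1\otimes 1_B$), the defining property of $\alpha$ gives $\alpha\circ\pi_0=\iota$, and hence $\eps_P=\bar\mu_{B,A}\circ\pi_0$. Exactness of \eqref{PB} at $B$ and at $P_0\ptens{A}B$ then forces $\bar\mu_{B,A}$ to be bijective, so Lemma~\ref{lemma:L1} delivers the remaining content of (i).

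The equivalence $\mathrm{(i)}\Longleftrightarrow\mathrm{(iii)}$ is obtained by the symmetric argument, using a projective resolution of $B$ in $A\lmod$ and Proposition~\ref{pstranschar}\,(iii). For $\mathrm{(i)}\Longleftrightarrow\mathrm{(iv)}$ I would first recall, as in the proof of Proposition~\ref{pstranschar}, that if $0\lar A\lar L_\bullet$ is a projective resolution in $A\bimod A$, then $0\lar B\lar L_\bullet\ptens{A}B$ is a projective resolution of $B$ in $A\lmod$; tensoring on the left with $B$ over $A$ yields exactly the complex in \eqref{qflres}, so its homology computes $\Tor_\bullet^A(B,B)$, and replacement of the leftmost term $B\ptens{A}B$ by $B$ is handled precisely as in the (ii) argument. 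No step presents a genuine obstacle; the only mildly delicate point is the identification of each of $\eps_P$, $\eps_Q$, $\eps_L$ with the composition of the canonical map $\alpha$ and the multiplication $\bar\mu_{B,A}$, which is routine from the universal properties. The Open Mapping Theorem enters implicitly, upgrading $\bar\mu_{B,A}$ from a continuous bijection to a topological isomorphism and thereby keeping the Hausdorffness condition on $\Tor_0^A(B,B)$ compatible with the algebraic exactness statements.
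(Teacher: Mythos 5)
Your argument is correct and follows essentially the same route as the paper: identify the homology of the tensored complex with $\Tor_\bullet^A(B,B)$, observe that exactness at $B$ and at $P_0\ptens{A}B$ amounts to bijectivity of $\bar\mu_{B,A}$, and invoke Lemma~\ref{lemma:L1} together with Proposition~\ref{charepi}; your detour through Proposition~\ref{pstranschar} in the forward direction is only a repackaging of the same computation. The paper's proof is just a more compressed version of yours, handling (iii) and (iv) by the same symmetry and the same observation about $L_\bullet\ptens{A}B$ being a projective resolution.
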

\begin{proof}
$\mathrm{(i)}\Longleftrightarrow\mathrm{(ii)}$.
Clearly, if $m\ge 1$, then \eqref{PB} is exact at $P_m\ptens{A} B$ if and only if
$\Tor_m^A(B,B)=0$. Since the $0$th homology of $P_\bullet\ptens{A} B$ is precisely
$\Tor_0^A(B,B)$, we see that \eqref{PB} is exact at $P_0\ptens{A} B$ if and only if
$\bar\mu_{B,A}\colon\Tor_0^A(B,B)\to B$ is bijective.
Now the result follows from Lemma~\ref{lemma:L1}.

Equivalences $\mathrm{(i)}\Longleftrightarrow\mathrm{(iii)}$ and
$\mathrm{(i)}\Longleftrightarrow\mathrm{(iv)}$ are proved similarly.
\end{proof}

\begin{corollary}
\label{cor:0psd}
Let $\varphi\colon A\to B$ be a Fr\'echet algebra homomorphism.
Define
\[
d_\varphi\colon B\Ptens A\Ptens B\to B\Ptens B, \quad
b\otimes a\otimes c\mapsto b\varphi(a)\otimes c-b\otimes\varphi(a)c
\quad (b,c\in B,\; a\in A).
\]
Then $\varphi$ is a $0$-pseudoflat
epimorphism if and only if the sequence
\begin{equation}
\label{0psd}
0 \lar B \xla{\mu_B} B\Ptens B \xla{d_\varphi} B\Ptens A\Ptens B
\end{equation}
is exact.
\end{corollary}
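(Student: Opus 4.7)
The plan is to apply the equivalence $\mathrm{(i)}\Longleftrightarrow\mathrm{(iv)}$ of Proposition~\ref{psflepi} with $n=0$, taking as the projective resolution of $A$ in $A\bimod A$ the truncation of the bar resolution \eqref{barA}. The relevant part is
\[
0 \lar A \xla{\mu_A} A\Ptens A \xla{d} A\Ptens A\Ptens A,
\]
with $d$ given by \eqref{bar_d}, and the bimodule structure on $L_0=A\Ptens A$ and $L_1=A\Ptens A\Ptens A$ coming from left multiplication on the first factor and right multiplication on the last factor. By Proposition~\ref{psflepi}, $\varphi$ is a $0$-pseudoflat epimorphism if and only if the sequence obtained by applying $B\ptens{A}(-)\ptens{A}B$ to this truncation and augmenting by $\eps_L$ is exact.

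The next step is to rewrite that tensored sequence in a transparent form. Because $A\Ptens A$ is the free $A$-bimodule on the generator $1\otimes 1$, the natural map
\[
B\ptens{A}(A\Ptens A)\ptens{A} B\;\lriso\;B\Ptens B,
\qquad b\otimes(x\otimes y)\otimes c\mapsto b\varphi(x)\otimes\varphi(y)c,
\]
is a topological isomorphism; similarly, since $L_1$ is the free $A$-bimodule on the middle factor $A$, we get
\[
B\ptens{A}(A\Ptens A\Ptens A)\ptens{A} B\;\lriso\;B\Ptens A\Ptens B,
\qquad b\otimes(x\otimes a\otimes y)\otimes c\mapsto b\varphi(x)\otimes a\otimes\varphi(y)c.
\]
Under the first identification, the augmentation $\eps_L$ — which is induced by $\mu_A$ followed by multiplication in $B$ — becomes $\mu_B$. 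Under both identifications together, a routine computation shows that $\id_B\ptens{A} d\ptens{A}\id_B$ sends $b\otimes a\otimes c\in B\Ptens A\Ptens B$ to $b\varphi(a)\otimes c - b\otimes\varphi(a)c$, i.e.\ coincides with $d_\varphi$. Hence the tensored sequence of Proposition~\ref{psflepi}(iv) is precisely \eqref{0psd}, and the corollary follows.

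There is no serious obstacle here; the only point requiring care is the bookkeeping in the two identifications above, where one must check that the balanced tensor-product relations over $A$ correctly transport the universal property of the free bimodules and that the resulting maps are continuous with continuous inverses (which is automatic once one writes down the inverse explicitly, e.g.\ $b\otimes c\mapsto b\otimes(1\otimes 1)\otimes c$ and $b\otimes a\otimes c\mapsto b\otimes(1\otimes a\otimes 1)\otimes c$).
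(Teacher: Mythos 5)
Your proposal is correct and follows exactly the paper's route: the paper's own proof is the one-line observation that, for the bimodule bar resolution $L_\bullet$ of $A$, the sequence \eqref{qflres} with $n=0$ \emph{is} \eqref{0psd}, which is precisely what you establish (you merely make explicit the identifications $B\ptens{A}(A\Ptens A)\ptens{A}B\cong B\Ptens B$ and $B\ptens{A}(A\Ptens A\Ptens A)\ptens{A}B\cong B\Ptens A\Ptens B$ that the paper leaves implicit). Your bookkeeping of $\eps_L\mapsto\mu_B$ and $\id_B\ptens{A}d\ptens{A}\id_B\mapsto d_\varphi$ is accurate.
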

\begin{proof}
If $0\lar A\lar L_\bullet$ is the bimodule bar resolution of $A$ (see \eqref{barA}),
then \eqref{qflres} for $n=0$ is precisely \eqref{0psd}.
\end{proof}

\begin{corollary}
\label{cor:surj_0psd}
A surjective Fr\'echet algebra homomorphism is a $0$-pseudoflat epimorphism.
\end{corollary}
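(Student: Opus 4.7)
The plan is to apply Corollary~\ref{cor:0psd} and verify exactness of
$$0 \lar B \xla{\mu_B} B\Ptens B \xla{d_\varphi} B\Ptens A\Ptens B$$
directly. Exactness at $B$ is immediate from the fact that $B$ is unital (so $\mu_B$ is surjective), and the only real content is the identity $\ker\mu_B = d_\varphi(B\Ptens A\Ptens B)$.

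I would reduce this identity to the corresponding fact for the bimodule bar resolution of $B$ itself. The crucial observation is the factorization
$$d_\varphi = d_B \circ (\id_B \Ptens \varphi \Ptens \id_B),$$
where $d_B\colon B^{\Ptens 3} \to B^{\Ptens 2}$ is the bar differential \eqref{bar_d} with $B$ in place of $A$. Since the bar resolution \eqref{barA} of $B$ is admissible, we have $\ker\mu_B = d_B(B^{\Ptens 3})$. Provided that $\id_B \Ptens \varphi \Ptens \id_B$ is surjective, the factorization immediately yields
$$d_\varphi(B\Ptens A\Ptens B) = d_B(B^{\Ptens 3}) = \ker\mu_B,$$
completing the proof.

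The main (and only nontrivial) step is therefore the surjectivity of $\id_B \Ptens \varphi \Ptens \id_B$. By the open mapping theorem, $\varphi$ is a quotient map of Fr\'echet spaces, and the projective tensor product of a quotient map with the identity on a Fr\'echet space is again a quotient map (in particular surjective); applying this fact twice gives what is needed. This is the place where surjectivity of $\varphi$ is genuinely used, as opposed to merely dense range or the epimorphism property, and it is the principal—though mild—obstacle of the argument.
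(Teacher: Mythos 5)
Your proof is correct and follows essentially the same route as the paper's: the paper's one-line argument (``replace $A$ by $B$ in \eqref{0psd} to get the bar resolution segment for $B$, then use that $\varphi$ is onto'') is exactly your factorization $d_\varphi = d_B\circ(\id_B\Ptens\varphi\Ptens\id_B)$ combined with the surjectivity of the tensored map. You have merely made explicit the details the paper leaves to the reader.
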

\begin{proof}
If we replace $A$ by $B$ in \eqref{0psd}, then we get an exact sequence
(in fact, this is the low-dimensional segment of the bimodule bar resolution for $B$).
Since $\varphi\colon A\to B$ is onto, we conclude that \eqref{0psd}
is exact as well.
\end{proof}

\begin{remark}
As we shall see below (Example \ref{example:non_0psd}), a Fr\'echet algebra
homomorphism with dense image, while being an epimorphism for an obvious reason,
 is not necessarily $0$-pseudoflat.
\end{remark}

The next proposition (which is a Fr\'echet algebra version of \cite[(87)]{BerDic})
emphasizes the difference between $0$-pseudoflat
and $1$-pseudoflat epimorphisms.

\begin{prop}
\label{prop:quot_1psd}
Let $I$ be a closed two-sided ideal in a nuclear Fr\'{e}chet algebra $A$.
Then the quotient map $\pi\colon A\to A/I$ is a $1$-pseudoflat epimorphism if
and only if the multiplication map
$\mu_I\colon I\Ptens I\to I$, $a\otimes b\mapsto ab$, is surjective.
\end{prop}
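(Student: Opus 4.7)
Set $B=A/I$. Since $\pi$ is surjective, Corollary~\ref{cor:surj_0psd} shows that $\pi$ is already a $0$-pseudoflat epimorphism, so the task reduces to showing that $\Tor_1^A(B,B)=0$ if and only if $\mu_I$ is surjective. Note that $B$ is nuclear as a Fr\'echet-space quotient of $A$, and that $I$ is nuclear as a closed subspace of $A$.

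The idea is to compute $\Tor_1^A(B,B)$ explicitly via a spliced resolution. I would take the admissible bar resolution of $I$ in $\rmod A$,
\[
0 \lar I \lar I\Ptens A \lar I\Ptens A\Ptens A \lar \cdots,
\]
and splice it with the (not necessarily admissible) exact sequence $0\lar B\lar A\lar I\lar 0$ to obtain an exact sequence in $\rmod A$,
\[
0 \lar B \lar A \lar I\Ptens A \lar I\Ptens A\Ptens A \lar \cdots,
\]
all of whose terms other than $B$ are free right $A$-modules. Because $A$ and $B$ are nuclear, Proposition~\ref{prop:nucl_Tor}(ii) identifies $\Tor_m^A(B,B)$ with the $m$th homology of the complex obtained by tensoring with $B$ over $A$ on the right:
\[
0 \lar B \lar I\Ptens B \lar I\Ptens A\Ptens B \lar \cdots.
\]

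The map $I\Ptens B\to B$, $i\otimes b\mapsto\pi(i)b$, is zero, while the next differential $d$ satisfies $d(i\otimes a\otimes b)=ia\otimes b-i\otimes\pi(a)b$. Hence $\Tor_1^A(B,B)=(I\Ptens B)/\operatorname{im} d$. To identify this cokernel, I would use that $\mathrm{id}_I\Ptens\pi\colon I\Ptens A\to I\Ptens B$ is a topological quotient with kernel $I\Ptens I$; this is the second place where nuclearity enters, via the preservation of short exactness by $I\Ptens(-)$ applied to $0\to I\to A\to B\to 0$. Under $\mathrm{id}_I\Ptens\pi$, the image of $d$ coincides with the image of the first bar differential $I\Ptens A\Ptens A\to I\Ptens A$, which by bar exactness equals $\ker\mu'_A$, where $\mu'_A\colon I\Ptens A\to I$ sends $i\otimes a\mapsto ia$. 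Applying the isomorphism theorem yields
\[
\Tor_1^A(B,B)\cong (I\Ptens A)/\bigl(I\Ptens I+\ker\mu'_A\bigr)\cong I/\mu_I(I\Ptens I),
\]
the second isomorphism induced by $\mu'_A$, which carries $I\Ptens I$ onto $\mu_I(I\Ptens I)$. Thus $\Tor_1^A(B,B)=0$ precisely when $\mu_I$ is surjective.

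The main technical point is the systematic use of nuclearity to handle non-admissible exact sequences — once through Proposition~\ref{prop:nucl_Tor} for the spliced resolution, and once through preservation of short exactness by $I\Ptens(-)$. Once these are granted, the argument is the Fr\'echet-analytic analogue of the classical computation $\Tor_1^A(A/I,A/I)\cong I/I^2$.
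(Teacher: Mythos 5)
Your argument is correct, but it follows a different route from the paper's. The paper reduces to showing $\Tor_1^A(A/I,A/I)=0$ exactly as you do (via Corollary~\ref{cor:surj_0psd} and Lemma~\ref{lemma:L1}), but then invokes the long exact sequence for $\Tor_i^A(A/I,-)$ associated with the non-admissible short exact sequence $0\to I\to A\to A/I\to 0$ (available because $A$ is nuclear, citing Eschmeier--Putinar), identifies the connecting data so that $\Tor_1^A(A/I,A/I)\cong\Tor_0^A(A/I,I)$, and then computes the latter as $\operatorname{coker}\mu_I$ by a single application of Proposition~\ref{prop:nucl_Tor}. You instead build an explicit free resolution of $A/I$ by splicing the bar resolution of $I$ in $\rmod A$ onto the short exact sequence, and compute the first homology of the tensored complex by hand; this is a Fr\'echet version of the classical chain-level proof that $\Tor_1^A(A/I,A/I)\cong I/I^2$. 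Both arguments lean on nuclearity at the same conceptual spot: the paper needs it for the long exact sequence and for Proposition~\ref{prop:nucl_Tor}, while you need it for Proposition~\ref{prop:nucl_Tor} and for the exactness of $I\Ptens(-)$ on $0\to I\to A\to A/I\to 0$ (so that $\ker(\id_I\Ptens\pi)=I\Ptens I$). That last fact --- exactness of the projective tensor product against a nuclear Fr\'echet space --- is standard but is not stated in the paper, so you should cite it explicitly (it is in Eschmeier--Putinar); with that reference supplied, every step of your computation (the vanishing of $I\Ptens B\to B$, the identification $\operatorname{im}d=(\id_I\Ptens\pi)(\ker\mu'_A)$ via surjectivity of the vertical maps, and the final isomorphism $(I\Ptens A)/(I\Ptens I+\ker\mu'_A)\cong I/\mu_I(I\Ptens I)$) checks out. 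The paper's route is shorter and avoids chain-level bookkeeping; yours is more self-contained in that it does not require the long exact sequence machinery for non-admissible extensions.
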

\begin{proof}
By Corollary \ref{cor:surj_0psd}, $\pi$ is a $0$-pseudoflat epimorphism.
Thus $\pi$ is a $1$-pseudoflat epimorphism if and only if $\Tor_1^A(A/I,A/I)=0$.
Since $A$ is nuclear, the exact sequence
\begin{equation}
\label{AI-short}
0 \lar A/I \lar A \lar I \lar 0
\end{equation}
induces a long exact sequence for $\Tor_i^A(A/I,-)$
(see \cite[Theorem 3.1.12]{EschmPut}), whose low-dimensional
segment looks as follows:
\begin{equation}
\label{Tor_AI_AI}
0 \lar \Tor_0^A(A/I,A/I) \xla{q} \Tor_0^A(A/I,A) \lar \Tor_0^A(A/I,I) \lar \Tor_1^A(A/I,A/I) \lar 0.
\end{equation}
Clearly, $\Tor_0^A(A/I,A)\cong (A/I)\ptens{A} A\cong A/I$. Applying
Corollary \ref{cor:surj_0psd} and Lemma~\ref{lemma:L1}, we see that
$\Tor_0^A(A/I,A/I)\cong A/I$. Under the above identifications,
the map $q$ in \eqref{Tor_AI_AI} becomes the identity map of $A/I$.
Hence $\Tor_1^A(A/I,A/I)$ is isomorphic to $\Tor_0^A(A/I,I)$.
Applying Proposition~\ref{prop:nucl_Tor} to \eqref{AI-short}, we see that
$\Tor_0^A(A/I,I)$ is the cokernel of $I\ptens{A} I\to A\ptens{A} I\cong I$,
which is isomorphic to the cokernel of $\mu_I$. The rest is clear.
\end{proof}

\begin{example}
Let $A=\cO(\CC)$ be the algebra of holomorphic functions on $\CC$,
and let $I=\{ f\in A : f(0)=0\}$. The multiplication map
$\mu_I\colon I\Ptens I\to I$ is not surjective, because the image of $\mu_I$
is contained in the ideal
$J=\{ f\in A : f(0)=f'(0)=0\}$, which is strictly smaller than $I$.
Hence the quotient map $A\to A/I$ is not $1$-pseudoflat by Proposition~\ref{prop:quot_1psd},
although it is a $0$-pseudoflat epimorphism by Corollary~\ref{cor:surj_0psd}.
\end{example}

\begin{definition}
\label{def:homepi}
Let $\varphi\colon A\to B$ be a Fr\'echet algebra homomorphism.
We say that $\varphi$ is a {\em weak homological epimorphism} if
$\varphi$ is an $n$-pseudoflat epimorphism for all $n\in\Z_+$.
\end{definition}

Thus $\varphi\colon A\to B$ is a weak homological epimorphism if and only if
any (hence all) of the infinite sequences
\begin{equation}
\label{PQL}
0 \lar B \lar P_\bullet\ptens{A} B, \quad
0 \lar B \lar B\ptens{A} Q_\bullet, \quad
0 \lar B \lar B\ptens{A} L_\bullet\ptens{A} B
\end{equation}
(where $P_\bullet$, $Q_\bullet$, $L_\bullet$ are as in Proposition~\ref{psflepi}) are exact.

\begin{definition}
We say that $\varphi$ is a {\em strong homological epimorphism} if
any (hence all) of the infinite sequences \eqref{PQL} are admissible.
\end{definition}

The fact that the admissibility of any of the sequences \eqref{PQL} implies the
admissibility of the other two follows from \cite[Prop. 3.2]{Pir_stbflat}.

\begin{remark}
\label{rem:hist}
The notion of a homological epimorphism has a remarkable history.
Strong homological epimorphisms were introduced by J.~L.~Taylor \cite{T2} under
the name of ``absolute localizations''. For nuclear Fr\'echet algebras, our notion
of a weak homological epimorphism is equivalent to Taylor's notion of a ``localization''
[loc. cit.]; see Section~\ref{sect:intro}.
In the purely algebraic setting, homological epimorphisms were rediscovered
by W.~Dicks \cite{Dicks} under the name of ``liftings'',
by W.~Geigle and H.~Lenzing \cite{GL} (where the current terminology was introduced),
by A.~Neeman and A.~Ranicki \cite{NR} under the name of ``stably flat homomorphisms''.
In \cite{Meyer}, R.~Meyer introduced strong homological epimorphisms in the setting
of nonunital bornological algebras under the name of ``isocohomological morphisms''.
Finally, O.~Ben-Bassat and K.~Kremnizer \cite{BBK} introduced weak homological epimorphisms
(under the name of ``homotopy epimorphisms'') in the abstract setting of commutative
algebras in symmetric monoidal quasi-abelian categories (cf. also \cite{HAGII}).
Amazingly, each of the above-mentioned authors seems to have introduced
essentially the same class of morphisms independently of the earlier literature.
\end{remark}

The following proposition is an analog of \cite[Prop.~5.1]{BerDic}.

\begin{prop}\label{psflnuc}
Let  $\varphi\colon A\to B$ be a Fr\'echet algebra epimorphism, and let $n\in\Z_+$.
Suppose that $A$ and $B$ are nuclear. Then the following conditions
are equivalent:
\begin{mycompactenum}
\item
$\varphi$ is $n$-pseudoflat;
\item
$M\perp^n_A B$ for each right Fr\'{e}chet $B$-module $M$;
\item
$B\perp^n_A N$ for each  left Fr\'{e}chet $B$-module $N$.
\end{mycompactenum}
\end{prop}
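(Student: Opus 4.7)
The implications (ii)$\Rightarrow$(i) and (iii)$\Rightarrow$(i) are immediate by specializing $M=B$ or $N=B$. By the symmetry between left and right modules, it suffices to establish (i)$\Rightarrow$(iii); the proof of (i)$\Rightarrow$(ii) is analogous.

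The plan for (i)$\Rightarrow$(iii) is a change-of-rings computation. I would choose the bar-type projective resolution $P_\bullet\to B$ in $\rmod A$ with $P_i=B\Ptens A^{\Ptens(i+1)}$, free as a right $A$-module and nuclear (using nuclearity of $A$ and $B$). Then $P_i\ptens{A} B\cong B\Ptens A^{\Ptens i}\Ptens B$ is free in $\rmod B$ and still nuclear. Since hypothesis (i) says that $\varphi$ is an $n$-pseudoflat epimorphism, Proposition~\ref{psflepi}(ii) produces the exact sequence
\[
0 \lar B \xla{\eps_P} P_0\ptens{A} B \lar \cdots \lar P_{n+1}\ptens{A} B
\]
in $\rmod B$. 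This sequence is in general \emph{not} admissible, which is the one delicate point.

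To handle the non-admissibility, I would invoke Proposition~\ref{prop:nucl_Tor} in the category $\rmod B$, using its condition (i) (nuclearity of the projectives), so that no nuclearity hypothesis on $N$ is needed. For any $N\in B\lmod$ and $0\le m\le n$, this gives
\[
\Tor_m^B(B,N)\;\cong\;H_m\bigl((P_\bullet\ptens{A} B)\ptens{B} N\bigr)\;\cong\;H_m(P_\bullet\ptens{A} N)\;=\;\Tor_m^A(B,N),
\]
where the middle isomorphism uses associativity $(P_i\ptens{A} B)\ptens{B} N\cong P_i\ptens{A}(B\ptens{B} N)\cong P_i\ptens{A} N$, and the final equality is the definition of $\Tor^A$ via the resolution $P_\bullet$. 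Since $B$ is projective (even free) as a right $B$-module, $\Tor_m^B(B,N)=0$ for $m\ge 1$ and $\Tor_0^B(B,N)\cong N$ is Hausdorff. Hence $\Tor_m^A(B,N)=0$ for $1\le m\le n$ and $\Tor_0^A(B,N)$ is Hausdorff, i.e., $B\perp^n_A N$.

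The hard part is the middle step: standard dimension-shifting would want an admissible projective resolution of $B$ as a $B$-module, whereas $P_\bullet\ptens{A} B$ is only guaranteed to be exact. Proposition~\ref{prop:nucl_Tor} substitutes for admissibility at the price of nuclearity, which is why it is essential to pick $P_\bullet$ so that each $P_i\ptens{A} B$ is nuclear; this makes the argument work uniformly in $N$, with no nuclearity assumption on $N$ itself.
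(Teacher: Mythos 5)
Your proposal is correct and follows essentially the same route as the paper: both take a nuclear projective resolution $P_\bullet\to B$ in $\rmod A$, use Proposition~\ref{psflepi} to see that $P_\bullet\ptens{A}B$ is an exact (though not necessarily admissible) complex of nuclear projective right $B$-modules augmented over $B$, and then apply Proposition~\ref{prop:nucl_Tor} in $\rmod B$ to tensor with $N$ and identify the result with the complex computing $\Tor^A_\bullet(B,N)$. The only cosmetic difference is that you phrase the conclusion via the vanishing of $\Tor^B_m(B,N)$ while the paper invokes the ``in particular'' clause of Proposition~\ref{prop:nucl_Tor} directly; these are the same argument.
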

\begin{proof}
$\mathrm{(i)}\Longrightarrow\mathrm{(iii)}$.
Let $0 \lar B \lar P_\bullet$ be a projective resolution in $\rmod A$ such that
all the modules $P_i$ are nuclear. By Proposition~\ref{psflepi},
\eqref{PB} is an exact sequence. Observe that all the modules in~\eqref{PB}
(including $B$) are nuclear and projective in $\rmod B$.
Therefore, applying $(-)\ptens{B}N$ and using Proposition~\ref{prop:nucl_Tor},
we get an exact sequence
\begin{equation}
\label{PN2}
0 \lar N \lar P_0\ptens{A} N \lar \cdots \lar P_{n+1}\ptens{A} N.
\end{equation}
Since $\varphi$ is an epimorphism, we see that $B\ptens{A} N\cong N$ canonically.
Hence \eqref{PN2} is isomorphic to \eqref{PN} with $M=B$.
Thus $B\perp^n_A N$.

The implication $\mathrm{(i)}\Longrightarrow\mathrm{(ii)}$ is proved
similarly;  $\mathrm{(ii)}\Longrightarrow\mathrm{(i)}$ and
$\mathrm{(iii)}\Longrightarrow\mathrm{(i)}$ are clear from
Definition~\ref{def:pseudoflat}.
\end{proof}

\begin{corollary}
Let  $\varphi\colon A\to B$ be a Fr\'echet algebra epimorphism.
Suppose that $A$ and $B$ are nuclear. Then the following conditions
are equivalent:
\begin{mycompactenum}
\item
$\varphi$ is a weak homological epimorphism;
\item
$M\perp_A B$ for each right Fr\'{e}chet $B$-module $M$;
\item
$B\perp_A N$ for each  left Fr\'{e}chet $B$-module $N$.
\end{mycompactenum}
\end{corollary}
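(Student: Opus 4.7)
The plan is to deduce this corollary immediately from Proposition~\ref{psflnuc} by quantifying over all $n\in\Z_+$. By Definition~\ref{def:homepi}, $\varphi$ is a weak homological epimorphism if and only if $\varphi$ is $n$-pseudoflat for every $n\in\Z_+$. By Definition~\ref{ntrans}, the relation $M\perp_A N$ means precisely that $M\perp^n_A N$ for all $n\in\Z_+$.

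With these unpackings, condition (i) of the corollary becomes ``$\varphi$ is $n$-pseudoflat for every $n$'', condition (ii) becomes ``$M\perp^n_A B$ for every right Fr\'echet $B$-module $M$ and every $n$'', and condition (iii) becomes ``$B\perp^n_A N$ for every left Fr\'echet $B$-module $N$ and every $n$''. Since $\varphi$ is assumed to be an epimorphism and $A,B$ are nuclear, Proposition~\ref{psflnuc} applies for each individual value of $n$, yielding the equivalence of the three ``$n$-level'' conditions. Intersecting over $n\in\Z_+$ gives the equivalence of the three ``$\infty$-level'' conditions stated in the corollary.

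There is essentially no obstacle here: the argument is a one-line quantification, and all the nontrivial work, namely transferring the exactness of the tensored resolution of $B$ in $\rmod A$ (resp.\ in $A\lmod$, resp.\ in $A\bimod A$) to an arbitrary $B$-module via $\ptens{B}$ combined with Proposition~\ref{prop:nucl_Tor} and the identification $B\ptens{A}N\cong N$ coming from $\varphi$ being an epimorphism (Proposition~\ref{charepi}), has already been carried out in the proof of Proposition~\ref{psflnuc}. Consequently the whole proof amounts to writing ``Apply Proposition~\ref{psflnuc} for each $n\in\Z_+$ and invoke Definitions~\ref{ntrans} and~\ref{def:homepi}.''
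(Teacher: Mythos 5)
Your proposal is correct and matches the paper's intent exactly: the corollary is stated without proof precisely because it follows from Proposition~\ref{psflnuc} by quantifying over all $n\in\Z_+$ and unwinding Definitions~\ref{ntrans} and~\ref{def:homepi} (noting that, under the standing hypothesis that $\varphi$ is an epimorphism, being a weak homological epimorphism reduces to being $n$-pseudoflat for every $n$). Nothing further is needed.
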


\begin{remark}
A similar result
\cite[Prop.~3.2]{Pir_stbflat} on strong homological epimorphisms does not
involve nuclearity assumptions.
\end{remark}

Our next goal is to characterize epimorphisms, $0$-pseudoflat epimorphisms,
and $1$-pseudoflat epimorphisms in terms of noncommutative differential forms.
Towards this goal, let us introduce some notation.
Let $\varphi\colon A\to B$ be a Fr\'echet algebra homomorphism.
Applying $B\ptens{A}(-)\ptens{A} B$ to the canonical sequence \eqref{Omega}
and composing with $B\ptens{A} B\to B$, we get
\[
0 \to B\ptens{A}\Omega^1 A\ptens{A} B \to B\Ptens B \xra{\mu_B} B \to 0.
\]
Identifying $\Omega^1 B$ with $\Ker\mu_B$, we see that there exists
a unique $B$-bimodule
morphism $\check\varphi\colon B\ptens{A}\Omega^1 A\ptens{A} B \to\Omega^1 B$
making the diagram
\begin{equation}
\label{check_varphi}
\xymatrix{
& B\ptens{A}\Omega^1 A\ptens{A} B \ar[r]  \ar[d]_{\check\varphi}
& B\Ptens B \ar@{=}[d] \ar[r]^(.6){\mu_B} & B \ar@{=}[d] \ar[r] & 0\\
0 \ar[r] & \Omega^1 B \ar[r]^{j_B} & B\Ptens B \ar[r]^(.6){\mu_B} & B \ar[r] & 0
}
\end{equation}
commute.

For each Fr\'echet $B$-bimodule $X$ we have a linear map
$$
\widetilde\varphi_X\colon\Der (B, X)\to \Der (A, X),\quad D\mapsto
D\varphi.
$$

\begin{theorem}\label{epicharde}
For a Fr\'echet algebra homomorphism $\varphi\colon A\to B$ the following
conditions are equivalent:
\begin{mycompactenum}
\item
$\varphi$ is an epimorphism;
\item
$\widetilde\varphi_X\colon\Der (B, X)\to \Der (A, X)$ is injective for each
$X\in B\bimod B$;
\item
the image of $\check\varphi\colon B\ptens{A} \Omega^1 A  \ptens{A} B \to \Omega^1 B$ is
dense in $\Omega^1 B$.
\end{mycompactenum}
\end{theorem}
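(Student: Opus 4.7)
The plan is to prove the two equivalences $(\mathrm{ii})\Leftrightarrow(\mathrm{iii})$ and $(\mathrm{i})\Leftrightarrow(\mathrm{iii})$, using $(\mathrm{iii})$ as a convenient bridge: it admits a clean Yoneda-type reformulation yielding $(\mathrm{ii})$, and it can be converted into a statement about $B\ptens{A}B$ yielding $(\mathrm{i})$.

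\textbf{For $(\mathrm{ii})\Leftrightarrow(\mathrm{iii})$.} I would first invoke Proposition~\ref{prop:epi_mod} to restate $(\mathrm{iii})$ as the statement that $\check\varphi$ is an epimorphism in $B\bimod B$, and then apply Yoneda to turn this into the injectivity, for every $X\in B\bimod B$, of the precomposition map
\[
{_B}\h_B(\Omega^1 B, X)\longrightarrow {_B}\h_B(B\ptens{A}\Omega^1 A\ptens{A} B, X).
\]
Next, I would use the universal property of $\Omega^1 B$ to identify the left side with $\Der(B,X)$, and I would use that $B\ptens{A}(-)\ptens{A} B\colon A\bimod A\to B\bimod B$ is left adjoint to restriction of scalars along $\varphi$, together with the universal property of $\Omega^1 A$, to identify the right side with ${_A}\h_A(\Omega^1 A, X)\cong\Der(A,X)$. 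Finally, computing $\check\varphi(1\otimes d_A(a)\otimes 1)$ directly from diagram~\eqref{check_varphi} gives $d_B(\varphi(a))$, which forces the precomposition map to coincide with $\widetilde\varphi_X$ under these identifications.

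\textbf{For $(\mathrm{i})\Leftrightarrow(\mathrm{iii})$.} The idea is to apply $B\ptens{A}(-)\ptens{A} B$ to the short exact sequence~\eqref{Omega} and then invoke right-exactness of $\ptens{A}$ \cite[II.4.12]{X1}, together with the canonical isomorphisms $B\ptens{A}(A\Ptens A)\ptens{A} B\cong B\Ptens B$ and $B\ptens{A} A\ptens{A} B\cong B\ptens{A} B$. This would give a right-exact sequence
\[
B\ptens{A}\Omega^1 A\ptens{A} B\xra{j_B\circ\check\varphi} B\Ptens B\longrightarrow B\ptens{A} B\longrightarrow 0,
\]
yielding $B\ptens{A} B\cong (B\Ptens B)/\overline{j_B(\check\varphi(B\ptens{A}\Omega^1 A\ptens{A} B))}$. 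Since $j_B$ embeds $\Omega^1 B$ as a closed subspace and $B\cong (B\Ptens B)/\Omega^1 B$, the map $\mu_{B,A}$ is then identified with the natural surjection $(B\Ptens B)/\overline{j_B(\check\varphi(\cdots))}\to (B\Ptens B)/\Omega^1 B$. By the open mapping theorem this surjection is a topological isomorphism iff it is injective iff $\overline{\check\varphi(B\ptens{A}\Omega^1 A\ptens{A} B)}=\Omega^1 B$, i.e.\ iff $\check\varphi$ has dense image; combining with Proposition~\ref{charepi} completes the equivalence.

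\textbf{Main obstacle.} The only real subtlety I anticipate is purely topological: right-exactness of $\ptens{A}$ in the Fr\'echet setting holds only up to closure, and Proposition~\ref{charepi} is inherently a topological statement, so the open mapping theorem will be essential for matching bijectivity with topological isomorphism. Once this care is taken, the identification of the precomposition map with $\widetilde\varphi_X$ is immediate from diagram~\eqref{check_varphi}, and no deeper technical input should be required.
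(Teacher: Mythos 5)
Your proposal is correct, and half of it coincides with the paper's argument: the equivalence of (ii) and (iii) is proved in the paper exactly as you describe, via the commutative square \eqref{derprdi} coming from the universal properties of $\Omega^1$ and $\ptens{A}$, combined with Proposition~\ref{prop:epi_mod}. Where you genuinely diverge is in how (i) is tied to the rest. The paper proves $\mathrm{(i)}\Leftrightarrow\mathrm{(ii)}$ directly: for $\mathrm{(i)}\Rightarrow\mathrm{(ii)}$ it uses the split-null extension $B\oplus X$ and the pair of homomorphisms $b\mapsto (b,D(b))$ and $b\mapsto (b,0)$, which agree after precomposition with $\varphi$ whenever $D\varphi=0$; for $\mathrm{(ii)}\Rightarrow\mathrm{(i)}$ it applies the hypothesis to the single derivation $B\to B\ptens{A}B$, $b\mapsto b\otimes_A 1-1\otimes_A b$, whose vanishing yields an explicit inverse of $\mu_{B,A}$. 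You instead prove $\mathrm{(i)}\Leftrightarrow\mathrm{(iii)}$ by applying $B\ptens{A}(-)\ptens{A}B$ to \eqref{Omega}, identifying $B\ptens{A}B$ with $(B\Ptens B)/\overline{j_B(\check\varphi(\cdots))}$ and $\mu_{B,A}$ with the induced surjection onto $(B\Ptens B)/\Omega^1 B$, so that injectivity of $\mu_{B,A}$ visibly amounts to density of the image of $\check\varphi$; Proposition~\ref{charepi} and the open mapping theorem then close the loop. This is sound. The one point to state with care is that \cite[II.4.12]{X1}, as invoked in the paper, gives preservation of surjections, whereas you need the slightly stronger fact that the kernel of the induced surjection $B\Ptens B\to B\ptens{A}B$ equals the \emph{closure} of the image of the preceding map (i.e., $\ptens{A}$ preserves cokernels); this is standard and is implicitly the content of \eqref{alpha_can}, but it deserves an explicit sentence. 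The trade-off: the paper's route is more elementary and needs no exactness property of $\ptens{A}$ at all, while yours is more structural, makes the correspondence between $\overline{\mathrm{im}\,\check\varphi}=\Omega^1 B$ and bijectivity of $\mu_{B,A}$ completely transparent, and is closer in spirit to how the paper upgrades this circle of ideas in Theorems~\ref{derpr0pf} and~\ref{kalch}.
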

\begin{proof}
$\mathrm{(i)}\Longrightarrow\mathrm{(ii)}$.
Given $X\in B\bimod B$, we make the Fr\'echet space
$B\oplus X$ into a Fr\'echet algebra by letting
\[
(b,x)(c,y)=(bc,by+xc)\qquad (b,c\in B,\; x,y\in X).
\]
Suppose that $\varphi$ is an
epimorphism. For every $D\in\Der (B, X)$ we have a Fr\'echet algebra homomorphism
\[
\psi\colon B\to B\oplus X,\quad \psi(b)=(b,D(b))\quad (b\in B).
\]
If $D\varphi=0$, then $\psi\varphi=\psi'\varphi$, where
\[
\psi'\colon B\to B\oplus X,\quad \psi'(b)=(b,0)\quad (b\in B).
\]
Since $\varphi$  is an epimorphism, we have $\psi=\psi'$, i.e., $D=0$.

$\mathrm{(ii)}\Longrightarrow\mathrm{(i)}$. Suppose that $\widetilde\varphi_X$ is
injective for each $X\in B\bimod B$. Consider the derivation
$$
D\colon B\to B\ptens{A} B,\quad b\mapsto b\otimes_A 1-1\otimes_A b.
$$
Since $D\varphi=0$, we have $D=0$, i.e., $b\otimes_A 1=1\otimes_A b$ for each $b\in B$. Then
the continuous linear map $B\to B\ptens{A} B$, $b\mapsto b\otimes_A 1$, is the
inverse of the multiplication $\mu_{B,A}\colon B\ptens{A} B\to B$. Thus $\mu_{B,A}$ is
a topological isomorphism, i.e., $\varphi$ is an epimorphism (see Proposition~\ref{charepi}).

$\mathrm{(ii)}\Longleftrightarrow\mathrm{(iii)}$. By the universal
properties of $\Omega^1$ and $\ptens{A}$, for each Fr\'echet
$B$-bimodule $X$  there exists a commutative diagram
\begin{equation}\label{derprdi}
\xymatrix@C=20pt{
 \Der(B,X) \ar[rr]^{\wt{\varphi}_X}\ar@{=}[d] && \Der(A,X) \ar@{=}[d]  \\
 {_B}\h{_B}( \Omega^1 B,X)\ar[rr]^(.4){\check\varphi_X}
 &&{_B}\h{_B}(B\ptens{A} \Omega^1 A  \ptens{A}
B,X)
 }
\end{equation}
where $\check\varphi_X$ induced by $\check\varphi$.
Thus (ii) holds if and only if
$\check\varphi$ is an epimorphism in $B\bimod B$, which is equivalent to (iii)
by Proposition~\ref{prop:epi_mod}.
\end{proof}

\begin{theorem}
\label{derpr0pf}
A Fr\'echet algebra homomorphism $\varphi\colon A\to B$ is a $0$-pseudoflat epimorphism
if and only if $\check\varphi\colon B\ptens{A} \Omega^1 A  \ptens{A} B \to \Omega^1 B$ is onto.
\end{theorem}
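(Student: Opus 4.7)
The plan is to reduce the question to a statement about the image of the map $d_\varphi$ from Corollary~\ref{cor:0psd}, and then to identify that image with the image of $\check\varphi$. By Corollary~\ref{cor:0psd}, $\varphi$ is a $0$-pseudoflat epimorphism if and only if the sequence~\eqref{0psd} is exact. Since $\mu_B\circ d_\varphi=0$, the image of $d_\varphi$ lies in $\Omega^1 B=\Ker\mu_B$; and as $\mu_B$ is onto, exactness of~\eqref{0psd} is equivalent to the corestriction of $d_\varphi$ to $\Omega^1 B$ being surjective. It therefore suffices to show that $d_\varphi$ and $\check\varphi$ have the same image inside $\Omega^1 B$.

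To achieve this, I will construct a continuous surjection $q\colon B\Ptens A\Ptens B\twoheadrightarrow B\ptens{A}\Omega^1 A\ptens{A} B$ fitting into the identity $j_B\circ\check\varphi\circ q=d_\varphi$; once this is done, the image of $d_\varphi$ coincides with $j_B$ applied to the image of $\check\varphi$, and the theorem follows. The key input for constructing $q$ is the bar resolution~\eqref{barA}: its differential $d\colon A\Ptens A\Ptens A\to A\Ptens A$ from~\eqref{bar_d} has image $\Ker\mu_A=\Omega^1 A$, so its corestriction $A\Ptens A\Ptens A\twoheadrightarrow\Omega^1 A$ is a continuous surjection in $A\bimod A$. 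Applying $B\ptens{A}(-)\ptens{A} B$, which preserves surjections by~\cite[II.4.12]{X1}, and using the canonical isomorphism $B\ptens{A}(A\Ptens A\Ptens A)\ptens{A} B\cong B\Ptens A\Ptens B$ will yield the desired $q$, which unwinds to $q(b\otimes a\otimes c)=-b\otimes_A d_A(a)\otimes_A c$.

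The remaining step, which I expect to be routine, is to verify the identity $j_B\circ\check\varphi\circ q=d_\varphi$ by a direct diagram chase in~\eqref{check_varphi}: tracing $-b\otimes_A d_A(a)\otimes_A c$ through $j_B\circ\check\varphi$ into $B\Ptens B$ gives $-(b\otimes\varphi(a)c-b\varphi(a)\otimes c)=d_\varphi(b\otimes a\otimes c)$. The main thing to keep track of will be the canonical identification $B\ptens{A}(A\Ptens A)\ptens{A} B\cong B\Ptens B$ implicit in the top row of~\eqref{check_varphi}, but no substantive difficulty should arise once the factorization is in place.
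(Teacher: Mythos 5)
Your proposal is correct and follows essentially the same route as the paper: the paper likewise factors $d_\varphi$ as $j_B\circ\check\varphi\circ\tilde p_A$, where $\tilde p_A$ is exactly your $q$, obtained by applying $B\ptens{A}(-)\ptens{A}B$ to the corestriction of the bar differential $d\colon A^{\Ptens 3}\to A^{\Ptens 2}$ onto $\Omega^1 A=\Ker\mu_A$, and then uses surjectivity of $\tilde p_A$ (via preservation of surjections by $\Ptens$) to conclude that $\check\varphi\circ\tilde p_A$ is onto iff $\check\varphi$ is. Your explicit formula $q(b\otimes a\otimes c)=-b\otimes_A d_A(a)\otimes_A c$ and the sign check are consistent with the conventions $d(a\otimes b\otimes c)=ab\otimes c-a\otimes bc$ and $j_A(d_A(a))=1\otimes a-a\otimes 1$.
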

\begin{proof}
Since $j_A$ is a kernel of $\mu_A$ in \eqref{Omega},
we see that the map $d\colon A^{\Ptens 3}\to A^{\Ptens 2}$
defined by \eqref{bar_d} factorizes
as follows:
\[
\xymatrix{
& A\Ptens A\Ptens A \ar[d]_{p_A} \ar[dr]^{d}\\
0 \ar[r] & \Omega^1 A \ar[r]^{j_A} & A\Ptens A \ar[r]^(.6){\mu_A} & A \ar[r] & 0.
}
\]
Applying $B\ptens{A}(-)\ptens{A} B$ and combining with \eqref{check_varphi},
we get the commutative diagram
\[
\xymatrix{
& B\Ptens A\Ptens B \ar[d]_{\tilde p_A} \ar[dr]^{d_\varphi}\\
& B\ptens{A}\Omega^1 A\ptens{A} B \ar[r] \ar[d]_{\check\varphi}
& B\Ptens B \ar[r]^(.6){\mu_B} \ar@{=}[d] & B \ar[r] \ar@{=}[d] & 0\\
0 \ar[r] & \Omega^1 B \ar[r]^{j_B} & B\Ptens B \ar[r]^(.6){\mu_B} & B \ar[r] & 0
}
\]
where $d_\varphi$ is defined in Corollary~\ref{cor:0psd}.
Since $j_B$ is a kernel of $\mu_B$, we see that \eqref{0psd} is exact if and only if
$\check\varphi\circ\tilde p_A$ is onto. Since $p_A$ is onto, and since the projective
tensor product preserves surjections of Fr\'echet modules, it follows that
$\tilde p_A$ is onto. Hence $\check\varphi\circ\tilde p_A$ is onto if and only if $\check\varphi$ is onto.
This completes the proof.
\end{proof}

To construct examples of epimorphisms that are not $0$-pseudoflat,
we need the following lemma.

\begin{lemma}
\label{lemma:Tor0}
Let $\varphi\colon A\to B$ be a $0$-pseudoflat Fr\'echet algebra epimorphism,
let $M\in\rmod B$, and let $N\in B\lmod$. Assume that either $M$ or $N$ is flat
as a Fr\'echet $B$-module.
Then $\Tor_0^A(M,N)$ is Hausdorff.
\end{lemma}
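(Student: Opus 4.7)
My plan is to identify $\Tor_0^A(M,N)$ with the cokernel of an explicit continuous linear map and then to show that the image of that map is closed; Hausdorffness will follow from \eqref{Tor0-ptens}. Using the bimodule bar resolution \eqref{barA} of $A$ to compute $\Tor_0^A(M,N)$, I would identify this space with the cokernel of
\[
\psi_A\colon M\Ptens A\Ptens N \to M\Ptens N,\qquad m\otimes a\otimes n\mapsto m\varphi(a)\otimes n - m\otimes\varphi(a)n.
\]
Thus it suffices to show that $\mathrm{im}(\psi_A)$ is closed in $M\Ptens N$.

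To exploit the $0$-pseudoflatness of $\varphi$, I would invoke Corollary \ref{cor:0psd}: the map $d_\varphi\colon B\Ptens A\Ptens B \to B\Ptens B$ surjects onto $\Omega^1 B = \ker\mu_B$. Thus $d_\varphi$ factors as a surjection $\partial\colon B\Ptens A\Ptens B \twoheadrightarrow \Omega^1 B$ composed with the inclusion $j_B\colon \Omega^1 B \hookrightarrow B\Ptens B$. Applying $M\ptens{B}(-)\ptens{B} N$ and using the unit identifications $M\ptens{B} B\cong M$ and $B\ptens{B} N\cong N$, the tensored $d_\varphi$ is identified with $\psi_A$. Since tensor products preserve surjections, the tensored $\partial$ is still surjective, and hence
\[
\mathrm{im}(\psi_A) = \mathrm{im}\bigl(M\ptens{B}\Omega^1 B\ptens{B} N \to M\Ptens N\bigr),
\]
the displayed map being induced by $j_B$.

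I would then apply $M\ptens{B}(-)\ptens{B} N$ to the analog of \eqref{Omega} for $B$, namely the admissible sequence $0 \to \Omega^1 B \xrightarrow{j_B} B\Ptens B \xrightarrow{\mu_B} B \to 0$, which splits as left $B$-modules via $b\mapsto b\otimes 1$ and as right $B$-modules via $b\mapsto 1\otimes b$. If $N$ is flat, I would apply $M\ptens{B}(-)$ first: the left $B$-module splitting yields a continuous section after this tensoring, making the resulting sequence admissible in $\rmod B$; then $(-)\ptens{B} N$ preserves exactness by flatness of $N$. If $M$ is flat, I would apply the two tensor functors in the opposite order, using the right $B$-module splitting and flatness of $M$. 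Either way, the outcome is the exact sequence
\[
0 \to M\ptens{B}\Omega^1 B\ptens{B} N \to M\Ptens N \to M\ptens{B} N \to 0.
\]

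It would follow that $\mathrm{im}\bigl(M\ptens{B}\Omega^1 B\ptens{B} N \to M\Ptens N\bigr) = \ker(M\Ptens N \to M\ptens{B} N)$, which is closed since $M\ptens{B} N \cong M\ptens{A} N$ (Proposition \ref{charepi}) is a Fr\'echet space. Combined with the identification of the second paragraph, $\mathrm{im}(\psi_A)$ is closed, and so $\Tor_0^A(M,N)$ is Hausdorff. The main obstacle is producing the exact sequence of the previous paragraph: the flatness hypothesis on $M$ or $N$ is needed to preserve exactness through the relevant $\ptens{B}$ operation, and the order of the two tensor functors must be chosen so that each stage is justified either by an admissible TVS splitting of the ambient sequence or by flatness of the tensoring module.
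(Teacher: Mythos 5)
Your argument is correct and is essentially the paper's own proof: you factor the tensored bar differential through the surjection $B\Ptens A\Ptens B\twoheadrightarrow\Omega^1 B$ supplied by $0$-pseudoflatness (the paper's $\bar\varphi$), and then use the splitting of $0\to\Omega^1 B\to B\Ptens B\to B\to 0$ in $B\lmod$ (resp.\ $\rmod B$) together with flatness of $N$ (resp.\ $M$) to identify the image of $\psi_A$ with the closed kernel of $M\Ptens N\to M\ptens{B}N$. The only cosmetic difference is that you phrase the conclusion as closedness of $\mathrm{im}(\psi_A)$ rather than as bijectivity of the canonical map $\Tor_0^A(M,N)\to M\ptens{A}N$; by \eqref{Tor0-ptens} these are the same.
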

\begin{proof}
Since $\varphi$ is a $0$-pseudoflat epimorphism, we see that \eqref{0psd} is exact.
Since $j_B$ is a kernel of $\mu_B$, there exists a surjective morphism
$\bar\varphi\colon B \Ptens A\Ptens B\to \Omega^1 B$ such that the diagram
\begin{equation}
\label{bar-Omega1}
\xymatrix{
0 & B \ar[l] & B\Ptens B \ar[l]_{\mu_B} && B \Ptens A\Ptens B \ar[ll] \ar[dl]^{\bar\varphi}\\
&&& \Omega^1 B \ar[ul]^{j_B}
}
\end{equation}
commutes. (Note that $\bar\varphi=\check\varphi\circ\tilde p_A$, see the
proof of Proposition~\ref{derpr0pf}.)

Assume now that $N\in B\lmod$ is flat. Since the canonical sequence
\begin{equation}
\label{can_B}
0 \lar B \xla{\mu_B} B\Ptens B \xla{j_B} \Omega^1 B \lar 0
\end{equation}
splits in $B\lmod$, for each $M\in\rmod B$ the sequence
\[
0 \lar M \lar M\Ptens B \lar M\ptens{B}\Omega^1 B \lar 0
\]
obtained from \eqref{can_B} via $M\ptens{B}(-)$ is admissible. Applying $(-)\ptens{B} N$,
we get an exact sequence
\begin{equation}
\label{MN-Omega1}
0 \lar M\ptens{B} N \lar M\Ptens N \lar M\ptens{B}\Omega^1 B\ptens{B} N \lar 0.
\end{equation}
Let us now apply $M\ptens{B} (-) \ptens{B} N$ to \eqref{bar-Omega1}.
We obtain the following commutative diagram:
\begin{equation}
\label{bar-Omega1-MN}
\xymatrix@C-5pt{
0 & M\ptens{B} N \ar[l] & M\Ptens N \ar[l] && M \Ptens A\Ptens N \ar[ll] \ar[dl]^{\bar\varphi_{M,N}}\\
&&& M\ptens{B} \Omega^1 B \ptens{B} N \ar[ul]
}
\end{equation}
Since $\bar\varphi$ is onto, it follows that $\bar\varphi_{M,N}=\id_M\tens{B}\bar\varphi\tens{B}\id_N$
is onto. Together with the exactness of \eqref{MN-Omega1}, this implies that the
upper row of \eqref{bar-Omega1-MN} is exact. Identifying $M\ptens{B} N$ with $M\ptens{A} N$
(see Proposition~\ref{charepi}), we see that the sequence
\[
0 \lar M\ptens{A} N \lar M\Ptens N \lar M\Ptens A\Ptens N
\]
obtained from the low-dimensional segment of \eqref{barA} via $M\ptens{A} (-) \ptens{A} N$ is exact.
Equivalently, this means that the canonical map $\Tor_0^A(M,N)\to M\ptens{A} N$
is bijective, which happens if and only if $\Tor_0^A(M,N)$ is Hausdorff (see \eqref{Tor0-ptens}).

In the case where $N$ is arbitrary and $M$ is flat, the proof is similar.
\end{proof}

\begin{example}
\label{example:non_0psd}
Using Lemma~\ref{lemma:Tor0}, it is easy to construct Banach algebra
epimorphisms that are not $0$-pseudoflat. Consider, for example, the
nonunital Banach sequence algebras $\ell^1$ and $c_0$ (under pointwise
multiplication), let $A=\ell^1_+$ and $B=(c_0)_+$ denote their unitizations,
and let $\varphi\colon A\to B$ be the tautological embedding.
Since $\varphi(A)$ is dense in $B$, we see that $\varphi$ is an epimorphism.
Assume, towards a contradiction, that $\varphi$ is $0$-pseudoflat.
By \cite{X_flatmod}, the $1$-dimensional $B$-module $\CC=B/c_0$ is flat
(in fact, all Banach $B$-modules are flat \cite[VII.2.29]{X1}, because
$B$ is amenable \cite{Johnson}). Hence Lemma~\ref{lemma:Tor0} implies
that $\Tor_0^A(\CC,c_0)$ is Hausdorff. Consider now the admissible sequence
\[
0 \to \ell^1 \to A \to \CC \to 0
\]
of Banach $A$-modules (where $\ell^1\to A$ is the tautological embedding).
The low-dimensional segment of the respective long exact sequence
for $\Tor_i^A(-,c_0)$ looks as follows:
\begin{equation}
\label{Tor_c0}
0 \lar \Tor_0^A(\CC,c_0) \lar c_0 \xla{j} \Tor_0^A(\ell^1,c_0) \lar \Tor_1^A(\CC,c_0) \lar 0.
\end{equation}
By \cite[IV.5.9]{X1}, $\ell^1$ is a biprojective Banach algebra
(i.e., $\ell^1$ is a projective Banach $\ell^1$-bimodule), which implies, in particular,
that $\ell^1$ is projective in $\rmod A$ \cite[IV.1.3]{X1}.
Hence we may identify $\Tor_0^A(\ell^1,c_0)$ with
$\ell^1\ptens{A} c_0=\ell^1\ptens{\ell^1} c_0$, which is isomorphic to $\ell^1$
via the map $a\otimes x\mapsto (a_n x_n)$ (cf. \cite[II.3.9]{X1}
or \cite[Lemma 4.1]{Pir_bipr}). Under this identification, the map $j$ in \eqref{Tor_c0}
is nothing but the embedding of $\ell^1$ into $c_0$. This implies that
$\Tor_0^A(\CC,c_0)$ is topologically isomorphic to $c_0/\ell^1$ and is therefore
non-Hausdorff. The resulting contradiction shows that $\varphi$ is not $0$-pseudoflat.
\end{example}

Our next theorem is a functional analytic version of a result obtained by
Bergman and Dicks \cite[Remark 5.4]{BerDic} in the purely algebraic context.
The statement involves the following
condition\footnote[1]{This condition is missing in the first version of this preprint,
as well as in its journal version [J. Math. Anal. Appl. 485 (2020) 123817].
As a result, the proof of Theorem~\ref{kalch} given there contains a gap.
Specifically, the proof refers to Lemma~\ref{lemma:Tor0}, which actually does not apply
to the right $A$-module $B\ptens{A} \Omega^1 A$ because is has no
right $B$-module structure in general. This is the reason why we have to introduce
condition $(*)$ here. We do not know, however, whether this condition is really essential.

A corrigendum to [J. Math. Anal. Appl. 485 (2020) 123817] will presumably be published
in the same journal.}
on a Fr\'echet algebra homomorphism $A\to B$:
\begin{itemize}
\item[$(*)$]
$\Tor_0^A(B\ptens{A}\Omega^1 A,B)$ is Hausdorff.
\end{itemize}

\begin{theorem}
\label{kalch}
For a Fr\'echet algebra homomorphism $\varphi\colon A\to B$ the following
conditions are equivalent:
\begin{mycompactenum}
\item
$\varphi$ is a $1$-pseudoflat epimorphism;
\item
$\widetilde\varphi_X\colon\Der (B, X)\to \Der (A, X)$ is bijective for each
$X\in B\bimod B$, and $(*)$ holds;
\item
$\check\varphi\colon B\ptens{A} \Omega^1 A  \ptens{A} B \to \Omega^1 B$ is an
isomorphism in $B\bimod B$, and $(*)$ holds.
\end{mycompactenum}
\end{theorem}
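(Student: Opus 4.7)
The plan is to first dispatch (ii) $\iff$ (iii) by a purely formal Yoneda argument, and then to reduce (i) $\iff$ (iii) to a single $\Tor$ computation built from the universal sequence \eqref{Omega}. For (ii) $\iff$ (iii), the commutative diagram \eqref{derprdi} identifies $\wt\varphi_X$ with ${_B}\h{_B}(\check\varphi,X)$, so its bijectivity for every Fr\'echet $B$-bimodule $X$ is, by Yoneda's lemma, equivalent to $\check\varphi$ being an isomorphism in $B\bimod B$; in the Fr\'echet category the bijective continuous inverse supplied by Yoneda is automatically a topological isomorphism by the Open Mapping Theorem.

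For (i) $\iff$ (iii), I would apply $B\ptens{A}(-)$ to the short exact sequence \eqref{Omega}. Since \eqref{Omega} splits in $\rmod A$, this yields an admissible short exact sequence
\begin{equation*}
0 \to B\ptens{A}\Omega^1 A \to B\Ptens A \xra{\tau} B \to 0
\end{equation*}
in $\rmod A$, in which $B\Ptens A$ is free, and hence projective, as a right Fr\'echet $A$-module. Using $\Tor_i^A(B\Ptens A,B)=0$ for $i\ge 1$ and $\Tor_0^A(B\Ptens A,B)\cong B\Ptens B$, the associated long exact sequence of $\Tor_\bullet^A(-,B)$ collapses to
\begin{equation*}
0 \to \Tor_1^A(B,B) \to \Tor_0^A(B\ptens{A}\Omega^1 A,B) \xra{\delta} B\Ptens B \xra{\pi} \Tor_0^A(B,B) \to 0.
\end{equation*}
Under either (i) or (iii), Theorem~\ref{derpr0pf} guarantees that $\varphi$ is a $0$-pseudoflat epimorphism, so $\bar\mu_{B,A}$ identifies $\Tor_0^A(B,B)$ with $B$, $\pi$ with $\mu_B$, and therefore the image of $\delta$ with $\Omega^1 B$. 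Naturality of the canonical surjection $\alpha\colon \Tor_0^A(B\ptens{A}\Omega^1 A,B)\to B\ptens{A}\Omega^1 A\ptens{A} B$, combined with the defining diagram \eqref{check_varphi} of $\check\varphi$, will give the factorisation $\delta=j_B\circ\check\varphi\circ\alpha$ (without any use of $(*)$). Since $j_B$ is injective, this yields
\begin{equation*}
\Tor_1^A(B,B)=\Ker\delta=\alpha^{-1}(\Ker\check\varphi).
\end{equation*}
Assuming (iii), $\Ker\check\varphi=0$ and $(*)$ makes $\alpha$ injective, so $\Tor_1^A(B,B)=0$, proving (i). Conversely, assuming (i) forces $\Ker\alpha=0$ (hence $(*)$) and makes $\delta$ a continuous bijection onto $\Omega^1 B$; passing through $\alpha$ and $j_B$ then shows that $\check\varphi$ itself is a continuous bijection, and hence a topological isomorphism by the Open Mapping Theorem.

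The main obstacle I expect is precisely the role of $(*)$. The identity $\Tor_1^A(B,B)=\alpha^{-1}(\Ker\check\varphi)$ shows that, without $(*)$, the closure of zero in $\Tor_0^A(B\ptens{A}\Omega^1 A,B)$ would contribute to $\Tor_1^A(B,B)$ even when $\check\varphi$ itself is bijective, so an isomorphism statement about $\check\varphi$ alone cannot control $\Tor_1^A(B,B)$. Verifying the factorisation $\delta=j_B\circ\check\varphi\circ\alpha$ is routine but requires careful bookkeeping with the construction of the connecting map in the long exact sequence of $\Tor$ and with the definition of $\check\varphi$ from \eqref{check_varphi}.
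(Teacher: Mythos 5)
Your proposal is correct and follows essentially the same route as the paper: the equivalence (ii) $\Leftrightarrow$ (iii) via the diagram \eqref{derprdi}, and the equivalence (i) $\Leftrightarrow$ (iii) via the long exact $\Tor$-sequence coming from the admissible sequence obtained by applying $B\ptens{A}(-)$ to \eqref{Omega}, compared with the canonical sequence for $\Omega^1 B$ through the factorisation $j_B\circ\check\varphi\circ\alpha$ (the paper's diagram \eqref{1psd}). Your identity $\Tor_1^A(B,B)=\alpha^{-1}(\Ker\check\varphi)$ is exactly the mechanism the paper exploits, including the observation that $(*)$ is what prevents the closure of zero in $\Tor_0^A(B\ptens{A}\Omega^1 A,B)$ from contributing to $\Tor_1^A(B,B)$.
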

\begin{proof}
Since the canonical sequence \eqref{Omega} splits in $A\lmod$, the sequence
\begin{equation}
\label{B_tens_Omega}
0 \lar B \lar B\Ptens A \lar B\ptens{A}\Omega^1 A \lar 0
\end{equation}
obtained from \eqref{Omega} via $B\ptens{A}(-)$ is admissible in $\rmod A$.
Since $B\Ptens A$ is projective in $\rmod A$, the low-dimensional segment of the respective
long exact sequence for $\Tor_i^A(-,B)$ looks as follows:
\[
0 \lar \Tor_0^A(B,B) \lar B\Ptens B \lar \Tor_0^A(B\ptens{A}\Omega^1 A,B)
\lar \Tor_1^A(B,B) \lar 0.
\]
We have the following commutative diagram:
\begin{equation}
\label{1psd}
\xymatrix@R-10pt@C-5pt{
0 & \Tor_0^A(B,B) \ar[l] \ar[dd]_{\bar\mu_{B,A}} & B\Ptens B \ar[l] \ar@{=}[dd]
& \Tor_0^A(B\ptens{A}\Omega^1 A,B) \ar[l] \ar[d]_{\alpha}
& \Tor_1^A(B,B) \ar[l] & 0 \ar[l]\\
&&& B\ptens{A}\Omega^1 A \ptens{A} B \ar[d]_{\check\varphi}\\
0 & B \ar[l] & B\Ptens B \ar[l]_{\mu_B} & \Omega^1 B \ar[l]_{j_B} & 0 \ar[l]
}
\end{equation}

$\mathrm{(i)}\Longrightarrow\mathrm{(iii)}$.
If $\varphi$ is a $1$-pseudoflat epimorphism, then $\Tor_1^A(B,B)=0$, and
$\bar\mu_{B,A}$ is bijective by Lemma~\ref{lemma:L1}.
Since both rows in \eqref{1psd} are exact, we see that $\check\varphi\circ\alpha$ is bijective.
Hence $\alpha$ is injective, which happens if and only if $\alpha$ is bijective, or, equivalently,
if and only if $(*)$ holds (see \eqref{Tor0-ptens}). Using again the bijectivity
of $\check\varphi\circ\alpha$, we conclude that $\check\varphi$ is bijective.

$\mathrm{(iii)}\Longrightarrow\mathrm{(i)}$.
Taking into account \eqref{Tor0-ptens}, we see that $(*)$ means precisely that $\alpha$ is bijective.
Hence $\check\varphi\circ\alpha$ is also bijective.
In view of Theorem~\ref{derpr0pf}, (iii) implies that $\varphi$ is a $0$-pseudoflat epimorphism.
Hence $\bar\mu_{B,A}$ is bijective by Lemma~\ref{lemma:L1}.
Since both lines in \eqref{1psd} are exact, and since the vertical arrows in \eqref{1psd}
are bijective, we conclude that $\Tor_1^A(B,B)=0$.
Thus $\varphi$ is $1$-pseudoflat.

$\mathrm{(ii)}\Longleftrightarrow\mathrm{(iii)}$.
Observe that $\check\varphi$ is an isomorphism if and only if for each $X\in B\bimod B$
the map $\check\varphi_X$ in \eqref{derprdi} is bijective, which is equivalent to the bijectivity of
$\widetilde\varphi_X$.
\end{proof}

\begin{remark}
Weak and strong homological epimorphisms can be nicely interpreted
in the language of derived categories (cf. \cite{GL,Meyer,Pir_qfree,BBK}).
Although we do not need this below, we find it relevant to give at least one
of such interpretations (for the convenience of those readers who are used
to think in terms of derived categories). If $A$ is a Fr\'echet algebra, then
there are two ways of making $A\lmod$ into an exact category (in Quillen's sense \cite{Quillen}).
The first (traditional) exact structure is as follows. Suppose that $M\xra{i} N\xra{p} P$
is an exact pair of morphisms in $A\lmod$ (i.e., $i$ is a kernel of $p$ and $p$ is a cokernel of $i$).
We say that such a pair is admissible if it splits in the category of Fr\'echet spaces.
It is easy to show that the collection of all admissible
exact pairs makes $A\lmod$ into an exact category.
We use the same notation $A\lmod$ to denote the resulting exact category
(this will not lead to a confusion).
Alternatively, we can make $A\lmod$ into an exact category by declaring that {\em all}
exact pairs are admissible. The fact that the collection of all exact pairs in $A\lmod$
indeed satisfies the axioms of an exact category follows from the observation
that $A\lmod$ is quasi-abelian, cf. \cite{Prosm_der_FA}.
The resulting exact category will be denoted by $A\lbarmod$.
We also let $\Fr=\CC\lmod$ and $\barFr=\CC\lbarmod$ denote the respective categories
of Fr\'echet spaces.

Homological algebra in the exact category $A\lmod$ is precisely the ``topological homology''
introduced by A.~Ya.~Helemskii \cite{X70} (see also \cite{X1,X2,EschmPut}).
The main advantage of $A\lmod$
over $A\lbarmod$ is that $A\lmod$ has enough projectives, which is not the case for $A\lbarmod$.
In fact, by a result of V.~A.~Geiler \cite{Geiler}, even the category $\barFr$
of Fr\'echet spaces does not have enough projectives. This is one of the main reasons why
homological algebra
in $A\lmod$ is developed much better than homological algebra in $A\lbarmod$.
Nevertheless, $A\lbarmod$ turns out to be useful in J.~L.~Taylor's homological approach
to multivariable spectral theory (cf. \cite{T2,EschmPut}).

Since $A\lmod$ has enough projectives, the functor $M\ptens{A}(-)\colon A\lmod\to\Fr$
is left derivable. The left derived functor of $M\ptens{A}(-)$ is denoted by
$M\Lptens{A}(-)\colon \bD^-(A\lmod)\to\bD^-(\Fr)$.
Exactly as in the algebraic case, $\Lptens{A}$ extends to a bifunctor from
$\bD^-(\rmod A)\times \bD^-(A\lmod)$ to $\bD^-(\Fr)$.
Now it is easy to see that a Fr\'echet algebra homomorphism
$\varphi\colon A\to B$ is a weak (respectively, strong) homological epimorphism
if and only if the canonical map $B\Lptens{A} B\to B$ is an isomorphism in $\bD^-(\barFr)$
(respectively, in $\bD^-(\Fr)$). This may be compared with condition (ii) of
Proposition~\ref{charepi}, which characterizes epimorphisms of Fr\'echet algebras.
\end{remark}

\section{Stein algebras}
\label{sect:Stein}

For the reader's convenience, let us recall some standard notation
(see, e.g., \cite[Chap.~0, \S4, no.~3]{EGA}). Given a morphism
$f\colon (X,\cO_X)\to (Y,\cO_Y)$ of $\CC$-ringed spaces and an $\cO_Y$-module $\cG$,
we let $f^{-1}\cG$ denote the sheaf on $X$ associated to the presheaf
$U\mapsto\cG(f(U))=\varinjlim\{\cG(V) : f(U)\subset V\}$.
We also let $f^*\cG=\cO_X\tens{f^{-1}\cO_Y}f^{-1}\cG$
denote the {\em inverse image} of $\cG$. Recall that, for each $x\in X$, we
have an $\cO_{X,x}$-module isomorphism
$(f^*\cG)_x\cong \cO_{X,x}\tens{\cO_{Y,f(x)}}\cG_{f(x)}$.

Throughout, all Stein spaces are assumed to be finite-dimensional. Let
$(X,\cO_X)$ be a Stein space. By Cartan's Theorem B, the functor $\Gamma$ of global sections
acting from the category of coherent $\cO_X$-modules to the category of all $\cO(X)$-modules
is exact. On the other hand, Cartan's Theorem A easily implies that $\Gamma$ is faithful.
Together with \cite[3.2]{Freyd}, this yields the following well-known result.

\begin{lemma}
\label{lemma:faith_ex}
A sequence $\cF\to\cG\to\cH$ of coherent $\cO_X$-modules is exact if and only if
the sequence of global sections $\cF(X)\to\cG(X)\to\cH(X)$ is exact.
\end{lemma}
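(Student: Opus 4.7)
The proof plan is a direct unpacking of the two classical theorems of Cartan combined with a general categorical fact that is already cited in the text preceding the lemma. First, I would note that the full subcategory of coherent $\cO_X$-modules inside all $\cO_X$-modules is abelian: kernels, images, and cokernels of morphisms between coherent sheaves are coherent on any complex analytic space by Oka's coherence theorem. Thus $\Gamma$ restricts to an additive functor between abelian categories, and Cartan's Theorem B asserts its exactness, since for every short exact sequence $0\to\cF\to\cG\to\cH\to 0$ in $\mathrm{Coh}(X)$ the connecting map $\Gamma(\cH)\to H^1(X,\cF)$ vanishes because $H^1(X,\cF)=0$.

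For the ``only if'' direction, I would use the general fact that an exact additive functor between abelian categories preserves exactness of arbitrary (not just short) sequences: breaking $\cF\to\cG\to\cH$ at $\cG$ into the two short exact sequences whose common middle term is $\mathrm{im}(\cF\to\cG)=\ker(\cG\to\cH)$ and applying $\Gamma$ shows that the image of $\Gamma(\cF)$ in $\Gamma(\cG)$ coincides with the kernel of $\Gamma(\cG)\to\Gamma(\cH)$.

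For the ``if'' direction, the standard categorical input is \cite[3.2]{Freyd}: a faithful exact additive functor between abelian categories reflects exactness. It remains to verify that $\Gamma$ is faithful on $\mathrm{Coh}(X)$, and this is precisely where Cartan's Theorem A enters. If $\varphi\colon\cF\to\cG$ is a morphism of coherent sheaves with $\Gamma(\varphi)=0$, then at every $x\in X$ the stalk map $\varphi_x$ vanishes on the image of $\cF(X)\to\cF_x$; Theorem A asserts that this image generates $\cF_x$ over $\cO_{X,x}$, so $\varphi_x=0$ for every $x\in X$, and hence $\varphi=0$. There is no real obstacle to overcome here — Cartan's two theorems carry all of the geometric content, and the categorical repackaging via Freyd's result is routine.
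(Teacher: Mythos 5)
Your proposal is correct and follows exactly the route the paper takes: Cartan's Theorem B gives exactness of $\Gamma$ on coherent sheaves, Cartan's Theorem A gives faithfulness (global sections generate each stalk, so a morphism killed by $\Gamma$ vanishes stalkwise), and the cited result \cite[3.2]{Freyd} that a faithful exact functor reflects exactness finishes the argument. You have merely spelled out the details that the paper leaves implicit in the two sentences preceding the lemma.
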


Recall also (see, e.g., \cite[V.6]{GR_Stein}) that, for each coherent $\cO_X$-module $\cF$,
the space $\cF(X)$ of global sections has a canonical topology making it into a Fr\'echet space.
Moreover, $\cO(X)$ is a Fr\'echet algebra, and $\cF(X)$ is a Fr\'echet $\cO(X)$-module.
Thus $\Gamma$ can be viewed as a functor from the category of coherent $\cO_X$-modules
to the category of Fr\'echet $\cO(X)$-modules.

Given $p\in X$, we denote by $\CC_p$ the one-dimensional
$\cO(X)$-module corresponding to the evaluation map $\cO(X)\to\CC$, $a\mapsto a(p)$.

\begin{theorem}\label{holeqcond}
Let $(X,\cO_X)$ and $(Y,\cO_Y)$ be Stein spaces, let $f\colon Y\to X$
be a holomorphic map, and let $f^\bullet\colon\cO(X)\to\cO(Y)$ denote the homomorphism
induced by $f$. Then the following conditions are equivalent:
\begin{mycompactenum}
\item
$f^\bullet$ is a weak homological epimorphism;
\item
$f^\bullet$ is a $1$-pseudoflat epimorphism;
\item
$f^\bullet$ is an epimorphism, and for each $q\in Y$ we have
$\cO(Y)\perp^1_{\cO(X)}\CC_{q} $;
\item
$f$ is an open embedding.
\end{mycompactenum}
\end{theorem}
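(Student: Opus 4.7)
The implication $\mathrm{(i)}\Rightarrow\mathrm{(ii)}$ is immediate from Definition~\ref{def:homepi}. For $\mathrm{(ii)}\Rightarrow\mathrm{(iii)}$, Stein algebras are nuclear, so Proposition~\ref{psflnuc} applies to the epimorphism $f^\bullet$; taking $N=\CC_q$ (viewed as a left $\cO(Y)$-module via evaluation at $q$) yields $\cO(Y)\perp^1_{\cO(X)}\CC_q$ at once. For $\mathrm{(iv)}\Rightarrow\mathrm{(i)}$ I would adapt Taylor's original argument \cite[Prop.~4.3]{T2} (extended to Stein manifolds in \cite[Thm.~3.1]{Pir_Stein} and to arbitrary Stein spaces in the bornological framework of \cite{BBBK}): a Koszul-type resolution associated to the coherent structure sheaf of the open set $f(Y)\subset X$, combined with Cartan's Theorems~A and~B, implements the argument in the Fr\'echet setting. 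This closes the loop once $\mathrm{(iii)}\Rightarrow\mathrm{(iv)}$ is established.

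For $\mathrm{(iii)}\Rightarrow\mathrm{(iv)}$, fix $q\in Y$ and set $p=f(q)\in X$. The plan is to prove separately that $f$ is injective and that $f$ is a local biholomorphism at $q$. For injectivity, the epimorphism condition and Proposition~\ref{charepi} give a topological isomorphism $\cO(Y)\ptens{\cO(X)}\cO(Y)\lriso\cO(Y)$. Nuclearity gives $\cO(Y)\Ptens\cO(Y)\cong\cO(Y\times Y)$, and a standard identification (using that the analytic fibre product $Y\times_X Y$ is Stein) rewrites $\cO(Y)\ptens{\cO(X)}\cO(Y)$ as $\cO(Y\times_X Y)$; since $\cO(Y)$ separates points on $Y$ (Cartan~A), the resulting iso $\cO(Y\times_X Y)\cong\cO(Y)$ forces the diagonal $Y\hookrightarrow Y\times_X Y$ to be surjective on points, so $f$ is injective. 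For the local biholomorphism, pick $g_1,\dots,g_n\in\cO(X)$ whose germs generate $\fm_p\cO_{X,p}$ (existence by Cartan~A). The Koszul complex $K_\bullet(g_1,\dots,g_n)$, viewed as a complex of free $\cO(X)$-modules, becomes exact in low degrees after localizing at $p$; by Cartan~B and Proposition~\ref{prop:nucl_Tor} it computes $\Tor^{\cO(X)}_m(\cO(Y),\CC_p)$ for small $m$. Tensoring with $\cO(Y)$ over $\cO(X)$ produces the Koszul complex $K_\bullet(f^\bullet(g_1),\dots,f^\bullet(g_n))$ over $\cO(Y)$; the hypothesis $\Tor^{\cO(X)}_1(\cO(Y),\CC_q)=0$ with $\Tor^{\cO(X)}_0(\cO(Y),\CC_q)$ Hausdorff, once transported to the stalk $\cO_{Y,q}$, expresses that $f^\bullet(g_1),\dots,f^\bullet(g_n)$ form a regular system of parameters at $q$ whose common zero-locus near $q$ is a scheme-theoretic point. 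Equivalently, the induced map of analytic local rings $\cO_{X,p}\to\cO_{Y,q}$ is an isomorphism, and the analytic inverse function theorem then shows that $f$ is a local biholomorphism at $q$. Combined with injectivity, $f$ is an open embedding.

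The principal obstacle is the local analysis in this last step: the $\Tor$-vanishing hypothesis lives globally over a Fr\'echet algebra, while the geometric conclusion is infinitesimal. Bridging this gap requires three ingredients used in tandem: Proposition~\ref{prop:nucl_Tor} to compute $\Tor$ via non-admissible but exact Koszul-type resolutions available only in the coherent framework; Cartan's Theorems~A and~B to identify stalks of coherent sheaves with localizations of global-section modules; and a faithful-flatness style descent propagating the Hausdorffness of the Fr\'echet space $\Tor^{\cO(X)}_0(\cO(Y),\CC_p)$ to the corresponding statement for $\cO_{Y,q}$. Each ingredient is individually standard, but the simultaneous control of locally convex topologies and algebraic multiplicities is where the real work lies.
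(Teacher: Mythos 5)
Your reductions $\mathrm{(i)}\Rightarrow\mathrm{(ii)}$ and $\mathrm{(ii)}\Rightarrow\mathrm{(iii)}$ coincide with the paper's, and you have correctly located the crux in $\mathrm{(iii)}\Rightarrow\mathrm{(iv)}$. But your treatment of that implication has a genuine gap: the Koszul complex $K_\bullet(g_1,\dots,g_n)$ on generators of $\fm_p\cO_{X,p}$ is \emph{not} a resolution of $\CC_p$ when $p$ is a singular (or non-reduced) point of $X$ --- already exactness at the $P_1$-spot fails, since the relations among the $g_i$ need not be generated by the trivial ones (take $X=\{xy=0\}\subset\CC^2$ and $p=0$). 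Since the theorem is stated for arbitrary finite-dimensional Stein spaces, you cannot feed a Koszul complex into Proposition~\ref{prop:nucl_Tor}; the paper instead invokes Forster's Satz~6.4 to get a genuine free resolution $0\lar\cO_X/\cI\lar\cO_X\lar\cP_1\lar\cP_2\lar\cdots$ by finite-rank free $\cO_X$-modules, takes global sections (Cartan~B), tensors, and transports exactness back to the sheaf level via faithful exactness of $\Gamma$ (Lemma~\ref{lemma:faith_ex}) --- not via any faithful-flatness descent. Moreover, your final step ``regular system of parameters at $q$ $\Rightarrow$ $\cO_{X,p}\to\cO_{Y,q}$ is an isomorphism'' is asserted, not proved, and it is exactly where the work is: from the stalk-level exact sequences one extracts (a) $\Tor_1^A(B,A/\fm_A)=0$ and (b) $B\tens{A}(A/\fm_A)\cong B/\fm_B$, i.e.\ $B\varphi(\fm_A)=\fm_B$; (b) gives surjectivity of $\varphi\colon A\to B$ by \cite[2.2.3]{GR_anloc}, hence finite presentation over the Noetherian ring $A$, then (a) plus the local criterion \cite[Chap.~II, \S3]{Bour_AC} gives freeness, and $\dim(B/B\fm_A)=1$ forces $\varphi$ to be an isomorphism. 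None of this is visible in your sketch.

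Two further points. Your injectivity argument routes through an identification $\cO(Y)\ptens{\cO(X)}\cO(Y)\cong\cO(Y\times_X Y)$, which is itself a nontrivial transversality statement (essentially of the same depth as the theorem) and is not established; the paper's argument is far more elementary --- an epimorphism induces an injection on characters $\Hom(\cO(Y),\CC)\to\Hom(\cO(X),\CC)$, and by Forster's Satz~1 characters of a Stein algebra are exactly the point evaluations, so $f$ is injective. Finally, your $\mathrm{(iv)}\Rightarrow\mathrm{(i)}$ is only a pointer to the literature; a Koszul-type resolution again only covers the smooth case, whereas for arbitrary Stein spaces one needs the transversality machinery of Eschmeier--Putinar, and the paper closes this implication by citing \cite[Corollary 4.2.5]{EschmPut} directly, which gives $\cO(Y)\perp_{\cO(X)}\cO(Y)$ and $\cO(Y)\ptens{\cO(X)}\cO(Y)\cong\cO(Y)$ for a Stein open subset $Y\subset X$.
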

\begin{proof}
$\mathrm{(i)}\Longrightarrow\mathrm{(ii)}$: this is immediate from
Definition~\ref{def:homepi}.

$\mathrm{(ii)}\Longrightarrow\mathrm{(iii)}$. Since $\cO(X)$ and $\cO(Y)$ are
nuclear, we can apply Proposition~\ref{psflnuc}.

$\mathrm{(iii)}\Longrightarrow\mathrm{(iv)}$. We first observe that $f$ is
injective. Indeed, since $f^\bullet$ is an epimorphism, we see that the map
$\Hom(\cO(Y),\CC)\to\Hom(\cO(X),\CC)$ induced by $f^\bullet$ is
injective. By \cite[Satz 1]{For} (see also \cite[V.7.3]{GR_Stein}), for each
Stein space $Z$ we have a natural bijection $Z\cong\Hom(\cO(Z),\CC)$
taking each $z\in Z$ to the evaluation map at $z$. Therefore $f$ is
injective.

Given $q\in Y$, let $p=f(q)$, and define the ideal sheaf
$\cI\subset\cO_X$ by
\[
\cI_x=\begin{cases}
\fm_{X,x} & \text{if } x=p,\\
\cO_{X,x} & \text{if } x\ne p,
\end{cases}
\]
where $\fm_{X,x}$ is the maximal ideal of $\cO_{X,x}$.
By \cite[Satz 6.4]{For}, there exists a resolution
\begin{equation}
\label{cP}
0 \lar \cO_X/\cI \lar \cO_X \lar \cP_1 \lar \cP_2\lar \cdots ,
\end{equation}
where all the $\cP_i$'s are free $\cO_X$-modules of finite rank, and where
$\cO_X\to\cO_X/\cI$ is the quotient map. Taking the sections over $X$
and applying Cartan's Theorem B, we obtain an exact complex
\begin{equation}
\label{cPmod}
 0 \lar \CC_p \lar \cO(X) \lar P_1\lar P_2\lar\cdots
\end{equation}
of Fr\'echet $\cO(X)$-modules. Note that $\CC_p=\CC_q$ in $\cO(X)\lmod$.

By Proposition~\ref{prop:nucl_Tor}, we can use \eqref{cPmod}
to calculate $\Tor_i^{\cO(X)}(\cO(Y),\CC_q)$.
Condition (iii) implies that
\begin{align*}
\Tor_1^{\cO(X)}(\cO(Y),\CC_q)&=0,\quad\text{and}\\
\Tor_0^{\cO(X)}(\cO(Y),\CC_q)&\cong
\cO(Y)\ptens{\cO(X)}\CC_q\cong\cO(Y)\ptens{\cO(Y)}\CC_q\cong\CC_q
\end{align*}
canonically (see Proposition~\ref{charepi}).
Hence we have an exact sequence
\begin{equation}
\label{P_Y}
0 \lar \CC_q \lar \cO(Y)\ptens{\cO(X)} \cO(X) \lar \cO(Y)\ptens{\cO(X)} P_1
\lar\cO(Y)\ptens{\cO(X)} P_2
\end{equation}
of Fr\'echet $\cO(Y)$-modules.

Now observe that the functors $\cF\mapsto\cO(Y)\ptens{\cO(X)} \cF(X) $ and
$\cF\mapsto (f^*\cF)(Y)$ obviously agree on the category of free
$\cO_X$-modules of finite rank. Hence~\eqref{P_Y} is isomorphic to the
sequence obtained by applying $\Gamma(Y,-)$ to
\begin{equation}
\label{f*cP}
0 \lar \cO_Y/\cI' \lar \cO_Y\cong f^*\cO_X \lar f^*\cP_1 \lar f^*\cP_2,
\end{equation}
where $\cI'\subset\cO_Y$ is the ideal sheaf given by
\[
\cI'_y=\begin{cases}
\fm_{Y,y} & \text{if } y=q,\\
\cO_{Y,y} & \text{if } y\ne q.
\end{cases}
\]
Applying Lemma~\ref{lemma:faith_ex}, we conclude that \eqref{f*cP} is exact.

Consider now the stalks of \eqref{cP} over $p$ and the stalks of
\eqref{f*cP} over $q$. For notational convenience, let $A=\cO_{X,p}$,
$B=\cO_{Y,q}$, $\fm_A=\fm_{X,p}$, $\fm_B=\fm_{Y,q}$, and
$F_i=(\cP_i)_x$. Let also $\varphi\colon A\to B$ denote the homomorphism induced
by $f$. We have two exact sequences
\begin{align}
\label{FA}
0 &\lar A/\fm_A \lar A \lar F_1 \lar F_2 \lar \cdots, \\
\label{FB}
0 &\lar B/\fm_B \lar B\tens{A} A \lar B\tens{A} F_1 \lar B\tens{A} F_2.
\end{align}
Comparing \eqref{FA} with \eqref{FB}, we see that
\begin{equation}
\label{Tor=0}
\Tor_1^A(B,A/\fm_A)=0,
\end{equation}
where $\Tor_1^A$ stands for the purely algebraic $\Tor$-functor.
Also, the exactness of \eqref{FA} and \eqref{FB} implies
that $B\tens{A} (A/\fm_A)\cong B/\fm_B$ via the map $b\otimes
(a+\fm_A)\mapsto b\varphi(a)+\fm_B$. It is readily verified that the latter
condition is equivalent to the equality $B\varphi(\fm_A)=\fm_B$. By
\cite[2.2.3]{GR_anloc}, this means that $\varphi$ is onto. In particular,
$B$ is a finitely generated $A$-module.  Since $A$ is Noetherian
\cite[2.0.1]{GR_anloc}, $B$ is a finitely presented $A$-module. Combining
this with~\eqref{Tor=0} and applying \cite[Chap. II, \S3, no.~2]{Bour_AC},
we see that $B$ is free over $A$. Since $\dim (B/B\fm_A)=\dim (B/\fm_B)=1$,
it follows from \cite[Appendix, 2.7 (i)]{GR_anloc}
that $\varphi$ is an isomorphism. By \cite[0.23]{Fischer}, this means
exactly that $f$ is locally biholomorphic. Since $f$ is also injective (see
above), we conclude that $f$ is an open embedding.

$\mathrm{(iv)}\Longrightarrow\mathrm{(i)}$.
Without loss of generality, we
may assume that $Y$ is a Stein open subset of $X$ and that
$\cO_Y=\cO_X|\,Y$. Thus $f^\bullet\colon\cO(X)\to\cO(Y)$ is the restriction map.
Recall from \cite[Corollary 4.2.5]{EschmPut} that, for each morphism
$g\colon (Z,\cO_Z)\to (X,\cO_X)$ of Stein spaces,
and for each coherent $\cO_Z$-module $\cF$, we have
$\cO(Y)\perp_{\cO(X)}\cF(Z)$, and $\cO(Y)\ptens{\cO(X)}\cF(Z)\cong\cF(g^{-1}(Y))$.
Letting $Z=Y$, $\cF=\cO_Y$, and $g=(\text{inclusion } Y\hookrightarrow X)$,
we obtain $\cO(Y)\perp_{\cO(X)}\cO(Y)$ and $\cO(Y)\ptens{\cO(X)}\cO(Y)\cong\cO(Y)$.
This means exactly that the restriction map $\cO(X)\to\cO(Y)$ is a weak
homological epimorphism.
\end{proof}

\begin{remark}
We have already pointed out in Section~\ref{sect:intro} that, if $f\colon Y\to X$ is an
open embedding of Stein spaces, then $f^\bullet\colon\cO(X)\to\cO(Y)$ is not necessarily
flat. Thus the class of $1$-pseudoflat Fr\'echet algebra
epimorphisms is essentially larger than the class
of flat epimorphisms, even in the commutative case.
It is interesting to compare this with recent purely algebraic results from \cite{AH,Ba_Posic}.
Namely, a $1$-pseudoflat epimorphism $A\to B$ of commutative rings
is necessarily flat provided that either (a) $A$ is Noetherian \cite[Prop. 4.5]{AH}, or
(b) the projective dimension of $B$ over $A$ is $\le 1$ \cite[Remark 16.9]{Ba_Posic}.
While property (a) rarely holds in the functional analytic context
(for example, the algebras of holomorphic functions on Stein manifolds
are never Noetherian), property (b) is more common. For example, if $\DD\subset\CC$
is the open unit disc, then the restriction
map $\cO(\CC)\to\cO(\DD)$ is a $1$-pseudoflat epimorphism by Theorem~\ref{holeqcond}
(actually, by \cite[Prop. 3.1]{T2}), satisfies (b) by \cite[Theorem V.1.8]{X1}, but is not flat
by \cite{Pir_Stein}. This shows that the above-mentioned result
of \cite{Ba_Posic} has no analog in the Fr\'echet algebra setting.
\end{remark}

\section{Algebras of $C^\infty$-functions}
\label{sect:C-inf}

In this section, we prove a $C^\infty$-analog of Theorem~\ref{holeqcond}.
Towards this goal, we need two lemmas. Let $A$, $B$, $C$ be Fr\'echet algebras,
$N$ be a Banach $B$-$C$-bimodule, and $P$ be a Fr\'echet $A$-$C$-bimodule.
Then $\h_C(N,P)$ is a Fr\'echet space under the topology of uniform convergence
on the unit ball of $N$. Moreover, $\h_C(N,P)$ is a Fr\'echet $A$-$B$-bimodule with
respect to the actions
\[
(a\cdot\varphi)(x)=a\cdot\varphi(x),
\quad (\varphi\cdot b)(x)=\varphi(b\cdot x)
\quad (a\in A,\; b\in B,\; x\in N,\; \varphi\in\h_C(N,P)).
\]

\begin{lemma}
\label{lemma:adj_ass}
Let $A$, $B$, $C$ be Fr\'echet algebras, $M$ be a Fr\'echet $A$-$B$-bimodule,
$N$ be a Banach $B$-$C$-bimodule, and $P$ be a Fr\'echet $A$-$C$-bimodule.
Then there exists a vector space isomorphism
\[
{_A}\h_C(M\ptens{B}N,P)\lriso {_A}\h_B(M,\h_C(N,P)),
\qquad \varphi\mapsto (x\mapsto (y\mapsto\varphi(x\otimes y))).
\]
\end{lemma}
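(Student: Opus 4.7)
The plan is to construct an explicit inverse to the stated map and check the mutually inverse identities. Given $\varphi\in{_A}\h_C(M\ptens{B}N,P)$, I set $\psi_\varphi(x)(y):=\varphi(x\otimes y)$ for $x\in M$, $y\in N$. For fixed $x$, the map $y\mapsto\varphi(x\otimes y)$ is linear and continuous (by continuity of the canonical bilinear map $M\times N\to M\ptens{B}N$ followed by $\varphi$) and right $C$-linear, the last point using the identity $x\otimes(yc)=(x\otimes y)\cdot c$ in $M\ptens{B}N$ together with right $C$-linearity of $\varphi$; hence $\psi_\varphi(x)\in\h_C(N,P)$. The bimodule identities $\psi_\varphi(ax)=a\cdot\psi_\varphi(x)$ and $\psi_\varphi(xb)=\psi_\varphi(x)\cdot b$ then follow from left $A$-linearity of $\varphi$ and from the $B$-balancing relation $xb\otimes y=x\otimes by$ in $M\ptens{B}N$, respectively.

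To see that $\psi_\varphi$ is continuous I use the Banach hypothesis on $N$: the topology of $\h_C(N,P)$ is generated by the seminorms $\sigma_q(\chi):=\sup_{\|y\|\le 1}q(\chi(y))$, one for each continuous seminorm $q$ on $P$. Joint continuity of the composition $M\times N\to M\ptens{B}N\xra{\varphi}P$ gives, for each such $q$, a continuous seminorm $p$ on $M$ with $q(\varphi(x\otimes y))\le p(x)\,\|y\|$ for all $(x,y)\in M\times N$, and this estimate reads $\sigma_q(\psi_\varphi(x))\le p(x)$. For the reverse assignment, given $\psi\in{_A}\h_B(M,\h_C(N,P))$ I form the bilinear map $\beta(x,y):=\psi(x)(y)$, which is jointly continuous by the same Banach-space estimate read in the opposite direction and which is $B$-balanced because right $B$-linearity of $\psi$ yields $\psi(xb)(y)=(\psi(x)\cdot b)(y)=\psi(x)(by)$. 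The universal property of $\ptens{B}$ then provides a continuous linear $\varphi_\psi\colon M\ptens{B}N\to P$ with $\varphi_\psi(x\otimes y)=\psi(x)(y)$; left $A$-linearity and right $C$-linearity transfer directly from $\psi$.

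The two assignments agree on elementary tensors by construction and therefore coincide on all of $M\ptens{B}N$ by linearity and continuity, so they are mutually inverse, which gives the required vector space isomorphism. The only real technical point is the passage between joint continuity of a bilinear map on $M\times N$ and continuity of its curried form into $\h_C(N,P)$; this requires the existence of a bounded absorbing set in $N$, which is precisely what the Banach hypothesis supplies. Everything else is routine bookkeeping with the universal property of the $B$-module projective tensor product.
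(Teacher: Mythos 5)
Your proof is correct and is exactly the standard currying argument that the paper deliberately omits (it cites \cite[II.5.22]{X1} and \cite[Prop.~3.2]{Pir_msb} instead of proving the lemma); in particular you correctly isolate the one place where the Banach hypothesis on $N$ is needed, namely in converting the joint-continuity estimate $q(\varphi(x\otimes y))\le p(x)\|y\|$ into continuity of $x\mapsto\psi_\varphi(x)$ for the sup-over-unit-ball seminorms on $\h_C(N,P)$, and back again. Nothing further is required.
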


We omit the standard proof (cf. \cite[II.5.22]{X1}, \cite[Prop. 3.2]{Pir_msb}).

\begin{lemma}
\label{lemma:Der_bij}
Let $\varphi\colon A\to B$ be a Fr\'echet algebra epimorphism,
and let $\epsilon\colon B\to\CC$
be a continuous homomorphism. Let $\CC_\epsilon$ denote the one-dimensional $B$-bimodule
corresponding to $\epsilon$. Suppose that $B\perp^1_A\CC_\epsilon$.
Then $\wt{\varphi}_{\CC_\epsilon}\colon\Der(B,\CC_\epsilon)\to\Der(A,\CC_\epsilon)$
is a bijection.
\end{lemma}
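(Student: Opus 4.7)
The injectivity of $\wt\varphi_{\CC_\epsilon}$ is immediate from Theorem~\ref{epicharde}(i)$\Rightarrow$(ii), since $\varphi$ is an epimorphism; only the surjectivity has to be established.

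My plan is to pass to the language of universal differential forms via the commutative diagram~\eqref{derprdi}, reducing the surjectivity of $\wt\varphi_{\CC_\epsilon}$ to the statement that $\check\varphi\colon B\ptens{A}\Omega^1 A\ptens{A}B\to\Omega^1 B$ becomes a topological isomorphism after applying $\CC_\epsilon\ptens{B}(-)\ptens{B}\CC_\epsilon$. Indeed, for any Fr\'echet $B$-bimodule $M$, a standard Hahn--Banach argument identifies ${_B}\h_B(M,\CC_\epsilon)$ with the continuous dual of $\CC_\epsilon\ptens{B}M\ptens{B}\CC_\epsilon$, so such a topological iso will yield the required bijection of derivation spaces.

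The key computation is an explicit identification of both sides with $\fm_B:=\ker\epsilon$. Since~\eqref{Omega} splits in $A\lmod$, applying $B\ptens{A}(-)$ yields the admissible sequence
\[
0\to B\ptens{A}\Omega^1 A\to B\Ptens A\to B\to 0
\]
in $\rmod A$ with $B\Ptens A$ projective. Tensoring with $\CC_\epsilon$ on the right over $A$ and invoking the long exact sequence for $\Tor^A(-,\CC_\epsilon)$, together with the hypothesis $B\perp^1_A\CC_\epsilon$ (which makes $\Tor_1^A(B,\CC_\epsilon)$ vanish and, since $\varphi$ is an epimorphism, makes $\Tor_0^A(B,\CC_\epsilon)$ canonically isomorphic to $\CC_\epsilon$), produces
\[
0\to (B\ptens{A}\Omega^1 A)\ptens{A}\CC_\epsilon\to B\xra{\epsilon}\CC_\epsilon\to 0.
\]
This identifies $(B\ptens{A}\Omega^1 A)\ptens{A}\CC_\epsilon$ topologically with $\fm_B$. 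Running the same argument with the canonical sequence~\eqref{Omega} for $B$ yields $\Omega^1 B\ptens{B}\CC_\epsilon\cong\fm_B$, and the defining diagram~\eqref{check_varphi} of $\check\varphi$ shows that $\check\varphi\ptens{B}\CC_\epsilon$ intertwines these two identifications. Applying $\CC_\epsilon\ptens{B}(-)$ and then taking continuous duals completes the argument.

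The main technical obstacle I anticipate is carefully tracking the left and right $A$-module structures on $B\ptens{A}\Omega^1 A$ through the successive tensor products, and verifying that the two embeddings of $\fm_B$ into $B\Ptens\CC_\epsilon\cong B$ really coincide; this amounts to a direct diagram chase based on the explicit formula $j_A(d_A(a))=1\otimes a-a\otimes 1$.
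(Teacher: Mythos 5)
Your plan is correct and follows essentially the same route as the paper's proof: both arguments tensor the canonical $\Omega^1$-sequences with $\CC_\epsilon$, use $B\perp^1_A\CC_\epsilon$ together with the epimorphism property to identify both $(B\ptens{A}\Omega^1 A)\ptens{A}\CC_\epsilon$ and $\Omega^1 B\ptens{B}\CC_\epsilon$ with $\Ker\epsilon$ compatibly with $\check\varphi\tens{B}\id_{\CC_\epsilon}$, and then dualize (the paper via the adjunction of Lemma~\ref{lemma:adj_ass}, you via the equivalent bimodule-dual description) to transfer this through diagram~\eqref{derprdi} to derivations. The one step you should make explicit is that the long exact sequence produces $\Tor_0^A(B\ptens{A}\Omega^1 A,\CC_\epsilon)$ rather than $(B\ptens{A}\Omega^1 A)\ptens{A}\CC_\epsilon$ --- in this topological setting the two may differ by the closure of zero --- but they coincide here because the vanishing of $\Tor_1^A(B,\CC_\epsilon)$ embeds the former into the Hausdorff space $B\Ptens\CC_\epsilon$, which is exactly the role played by the map $\alpha'$ in the paper's diagram~\eqref{BCperp}.
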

\begin{proof}
As in the proof of Theorem~\ref{kalch}, the admissible sequence
\eqref{B_tens_Omega} yields a long exact sequence of
$\Tor_i^A(-,\CC_\epsilon)$, whose low-dimensional segment fits into
the following commutative diagram:
\begin{equation}
\label{BCperp}
\xymatrix@R-10pt@C-5pt{
0 & \Tor_0^A(B,\CC_\epsilon) \ar[l] \ar[d]_{\alpha}
& B\Ptens \CC_\epsilon \ar[l] \ar@{=}[dd]
& \Tor_0^A(B\ptens{A}\Omega^1 A,\CC_\epsilon) \ar[l] \ar[d]_{\alpha'}
& \Tor_1^A(B,\CC_\epsilon) \ar[l] & 0 \ar[l]\\
& B\ptens{A}\CC_\epsilon \ar[d]_{\beta}
&& B\ptens{A}\Omega^1 A \ptens{A} \CC_\epsilon \ar[d]_{\gamma}\\
0 & \CC_\epsilon \ar[l] & B\Ptens \CC_\epsilon \ar[l]
& \Omega^1 B\ptens{B}\CC_\epsilon \ar[l] & 0 \ar[l]
}
\end{equation}
Here $\alpha$ and $\alpha'$ are the canonical maps from
$\Tor_0^A(-,-)$ to $(-)\ptens{A}(-)$, $\beta$ is the canonical map from
$B\ptens{A}\CC_\epsilon$ to $B\ptens{B}\CC_\epsilon\cong\CC_\epsilon$,
and $\gamma$ corresponds to $\check\varphi\tens{B}\id_{\CC_\epsilon}$
under the identification
$(B\ptens{A}\Omega^1 A\ptens{A} B)\ptens{B}\CC_\varepsilon\cong
B\ptens{A}\Omega^1 A \ptens{A} \CC_\epsilon$.
Finally, the bottom row of \eqref{BCperp} is obtained from \eqref{can_B}
via $(-)\ptens{B}\CC_\epsilon$, so it is exact because \eqref{can_B} splits in $\rmod B$.

Since $\varphi$ is an epimorphism, Proposition~\ref{charepi} implies that
$\beta$ is bijective. Since $B\perp^1_A\CC_\epsilon$, we see that
$\alpha$ is bijective and that $\Tor_1^A(B,\CC_\epsilon)=0$.
Together with the fact that both lines in \eqref{BCperp}
are exact, this implies that $\gamma\circ\alpha'$ is bijective.
Hence $\alpha'$ is injective, or, equivalently, bijective
(see \eqref{Tor0-ptens}), which in turn implies that $\gamma$ is bijective.
Thus $\gamma$ is an isomorphism in $B\lmod$.

Applying ${_B}\h(-,\CC_\epsilon)$ to $\gamma$, we obtain a vector space
isomorphism
\begin{equation}
\label{gamma-iso}
{_B}\h(\Omega^1 B\ptens{B}\CC_\epsilon,\CC_\epsilon)
\lriso {_B}\h(B\ptens{A}\Omega^1 A \ptens{A} \CC_\epsilon,\CC_\epsilon).
\end{equation}
Observe that there is a $B$-bimodule isomorphism
$\h_{\CC}(\CC_\epsilon,\CC_\epsilon)\cong\CC_\epsilon$
given by $f\mapsto f(1)$. Together with Lemma~\ref{lemma:adj_ass},
this implies that
\begin{equation}
\label{chainiso1}
{_B}\h(\Omega^1 B\ptens{B}\CC_\epsilon,\CC_\epsilon)
\cong {_B}\h_B(\Omega^1 B,\h_{\CC}(\CC_\epsilon,\CC_\epsilon))
\cong {_B}\h_B(\Omega^1 B,\CC_\epsilon),
\end{equation}
and
\begin{equation}
\label{chainiso2}
\begin{split}
{_B}\h(B\ptens{A}\Omega^1 A \ptens{A} \CC_\epsilon,\CC_\epsilon)
&\cong {_B}\h((B\ptens{A}\Omega^1 A\ptens{A} B)\ptens{B}\CC_\varepsilon,\CC_\epsilon)\\
&\cong {_B}\h_B(B\ptens{A}\Omega^1 A\ptens{A} B,\h_{\CC}(\CC_\epsilon,\CC_\epsilon))\\
&\cong {_B}\h_B(B\ptens{A}\Omega^1 A\ptens{A} B,\CC_\epsilon).
\end{split}
\end{equation}
Under the identifications \eqref{chainiso1} and \eqref{chainiso2},
the isomorphism \eqref{gamma-iso} becomes
\begin{equation}
\label{gamma-iso2}
{_B}\h_B(\Omega^1 B,\CC_\epsilon)
\lriso {_B}\h_B(B\ptens{A}\Omega^1 A \ptens{A} B,\CC_\epsilon).
\end{equation}
A routine calculation shows that \eqref{gamma-iso2} is nothing but the map
$\check\varphi_{\CC_\epsilon}$ from diagram \eqref{derprdi}
(in which we let $X=\CC_\epsilon$).
This readily implies that
$\wt{\varphi}_{\CC_\epsilon}\colon\Der(B,\CC_\epsilon)\to\Der(A,\CC_\epsilon)$
is a  vector space isomorphism.
\end{proof}

Let $X$ be a $C^\infty$-manifold.
We denote by $C^\infty(X)$ the Fr\'{e}chet algebra of infinitely differentiable
$\CC$-valued functions on $X$.
Similarly to the holomorphic case (see Section~\ref{sect:Stein}), given $p\in X$, we denote by $\CC_p$
the one-dimensional $C^\infty(X)$-module
corresponding to the evaluation map $C^\infty(X)\to\CC$, $a\mapsto a(p)$.

\begin{theorem}\label{BIGTHsmman}
Let $X$ and $Y$ be $C^\infty$-manifolds, let $f\colon Y\to X$
be a smooth map, and let $f^\bullet\colon C^\infty(X)\to C^\infty(Y)$ denote the
homomorphism induced by $f$. Then the following conditions are equivalent:
\begin{mycompactenum}
\item
$f^\bullet$ is a projective epimorphism, i.e., $f^\bullet$ is an epimorphism and
$C^\infty(Y)$ is projective in $C^\infty(X)\lmod$;
\item
$f^\bullet$ is a flat epimorphism, i.e., $f^\bullet$ is an epimorphism and
$C^\infty(Y)$ is flat in $C^\infty(X)\lmod$;
\item
$f^\bullet$ is a strong homological epimorphism;
\item
$f^\bullet$ is a weak homological epimorphism;
\item
$f^\bullet$ is a $1$-pseudoflat epimorphism;
\item
$f^\bullet$ is an epimorphism, and for each $q\in Y$ we have
$C^\infty(Y)\perp^1_{C^\infty(X)}\CC_{q}$;
\item
$f$ is an open embedding.
\end{mycompactenum}
\end{theorem}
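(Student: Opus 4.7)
The plan is to verify the equivalence via the cycle $(i)\Rightarrow (iii)\Rightarrow (iv)\Rightarrow (v)\Rightarrow (vi)\Rightarrow (vii)\Rightarrow (i)$, with condition $(ii)$ inserted via the trivial $(i)\Rightarrow (ii)$ (projective Fr\'echet modules are flat) and a short $(ii)\Rightarrow (iv)$: flatness of $C^\infty(Y)$ over $C^\infty(X)$ gives $\Tor_i^{C^\infty(X)}(C^\infty(Y),C^\infty(Y))=0$ for all $i\ge 1$, while the epimorphism property forces $\Tor_0\cong C^\infty(Y)$, so $f^\bullet$ is $n$-pseudoflat for every $n$. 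The remaining formal implications $(iii)\Rightarrow (iv)$ (admissible implies exact) and $(iv)\Rightarrow (v)$ are immediate from the definitions, and $(v)\Rightarrow (vi)$ is an application of Proposition~\ref{psflnuc} using the nuclearity of $C^\infty(X)$ and $C^\infty(Y)$, specialized to $N=\CC_q$.

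For $(i)\Rightarrow (iii)$, observe that a projective module $B=C^\infty(Y)$ is resolved by the trivial one-term complex $B\xla{\id}B$ in $C^\infty(X)\lmod$. Applying $B\ptens{C^\infty(X)}(-)$ and using the epimorphism identification $B\ptens{C^\infty(X)}B\cong B$, we obtain an augmented sequence which is tautologically admissible; condition (iii) of Proposition~\ref{psflepi} is thus satisfied, so $f^\bullet$ is a strong homological epimorphism.

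The analytic heart of the theorem is $(vi)\Rightarrow (vii)$. First, $f$ is injective: on any smooth manifold $Z$, the continuous characters of $C^\infty(Z)$ are precisely the point evaluations, so an algebra epimorphism $f^\bullet$ induces an injection on character spaces, which is exactly $f$. Next, for each $q\in Y$, apply Lemma~\ref{lemma:Der_bij} to the character $\epsilon=\mathrm{eval}_q\colon C^\infty(Y)\to\CC$: the transversality assumption in (vi) yields a bijection $\wt{\varphi}_{\CC_q}\colon\Der(C^\infty(Y),\CC_q)\to\Der(C^\infty(X),\CC_q)$. A standard algebraic argument shows that every derivation of $C^\infty(Z)$ into $\CC_z$ vanishes on $\fm_z^2$ and therefore corresponds to a tangent vector, giving the canonical identification $\Der(C^\infty(Z),\CC_z)\cong T_zZ$. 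Under this identification $\wt{\varphi}_{\CC_q}$ becomes the differential $df_q\colon T_qY\to T_{f(q)}X$, which is thus an isomorphism. By the inverse function theorem $f$ is a local diffeomorphism at each point, and combined with global injectivity this forces $f$ to be an open embedding.

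The hardest and genuinely new step is $(vii)\Rightarrow (i)$. Reducing to the case where $Y\subset X$ is open and $f$ is the inclusion, we must exhibit $C^\infty(Y)$ as a projective object in $C^\infty(X)\lmod$. In sharp contrast to the Stein setting of Theorem~\ref{holeqcond}, where even flatness typically fails, the smooth category admits partitions of unity and cutoff functions supported arbitrarily close to boundaries; the plan is to fix a locally finite open cover of $Y$ by subsets relatively compact in $X$ together with a $C^\infty$-partition of unity subordinate to it, and to use these to split $C^\infty(Y)$ off a free Fr\'echet $C^\infty(X)$-module built as a countable product indexed by the cover. This projectivity step is the main obstacle, both technically---since it requires delicate topological arguments to show that the constructed splitting is continuous and $C^\infty(X)$-linear---and conceptually, as condition $(i)$ is precisely the feature that distinguishes the $C^\infty$ case from the holomorphic one and makes the present theorem strictly stronger than Theorem~\ref{holeqcond}.
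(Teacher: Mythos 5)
Your overall architecture coincides with the paper's: the formal implications among (i)--(v) are handled the same way, $(v)\Rightarrow(vi)$ via Proposition~\ref{psflnuc} and nuclearity, and $(vi)\Rightarrow(vii)$ via injectivity on character spaces plus Lemma~\ref{lemma:Der_bij} and the identification $\Der(C^\infty(Z),\CC_z)\cong T_zZ$ followed by the inverse function theorem. All of that is correct and matches the paper.

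The genuine gap is in $(vii)\Rightarrow(i)$. You correctly identify projectivity of $C^\infty(Y)$ over $C^\infty(X)$ as the crux, but you only announce a ``plan'' (partition of unity, splitting off a free module) and then explicitly defer the ``delicate topological arguments'' showing that the splitting is continuous and $C^\infty(X)$-linear. That deferred part is precisely the content of the step, and your sketch as stated is ambiguous on the one point where a naive attempt fails: if $\{\varphi_i\}$ is a partition of unity on $Y$ and $e_i\in C^\infty_c(Y)\subset C^\infty(X)$ equals $1$ on $\mathrm{supp}\,\varphi_i$, the obvious candidate $f\mapsto\sum_i e_i\otimes\varphi_i f$ into $C^\infty(X)\Ptens C^\infty(Y)$ satisfies $\pi\rho=\id$ but is \emph{not} $C^\infty(X)$-linear, since the module action lives on the first tensor factor; one must instead take $f\mapsto\sum_i (e_i f)\otimes\varphi_i$, using that $e_i f$ extends by zero to $X$, and then verify continuity of the locally finite sum. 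The paper sidesteps all of this by invoking Ogneva's theorem [Og, Theorem~2] (with a footnote explaining how to pass from coordinate neighborhoods to general open sets), and separately notes the easy fact --- which your paragraph omits, though it is part of condition (i) --- that the restriction map has dense image and hence is an epimorphism. So your route is viable and essentially the one behind the cited result, but as written the decisive step is asserted rather than proved.
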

\begin{proof}
$\mathrm{(i)}\Longrightarrow\mathrm{(ii)}\Longrightarrow\mathrm{(iv)}$,
$\mathrm{(i)}\Longrightarrow\mathrm{(iii)}\Longrightarrow\mathrm{(iv)}$,
$\mathrm{(iv)}\Longrightarrow\mathrm{(v)}$: this is trivial.

$\mathrm{(v)}\Longrightarrow\mathrm{(vi)}$.
Since $C^\infty(X)$ and $C^\infty(Y)$ are
nuclear, we can apply Proposition~\ref{psflnuc}.

$\mathrm{(vi)}\Longrightarrow\mathrm{(vii)}$.
As in the proof of Theorem~\ref{holeqcond},
we first observe that $f$ is
injective. Indeed, since $f^\bullet$ is an epimorphism, we see that the map
$\Hom(C^\infty(Y),\CC)\to\Hom(C^\infty(X),\CC)$ induced by $f^\bullet$ is
injective. By \cite[Theorem 7.2]{Nestruev}, for each
smooth manifold $Z$ we have a natural bijection $Z\cong\Hom(C^\infty(Z),\CC)$
taking each $z\in Z$ to the evaluation map at $z$. Therefore $f$ is
injective.

To complete the argument we need to show that $f$ is a local diffeomorphism.
By the Inverse Function Theorem, it suffices to check that for each $q\in Y$ the
tangent map $df_q\colon T_q(Y)\to T_{f(q)}(X)$ is a vector space isomorphism.
Identifying $T_q(Y)$ with $\Der(C^\infty(Y),\CC_q)$ (see, e.g., \cite[III.3.1]{CCC1}),
we see that $df_q$ is nothing but
\[
(\wt{f^\bullet})_{\CC_q}\colon\Der(C^\infty(Y),\CC_q)\to\Der(C^\infty(X),\CC_q)
\]
(as in Theorem~\ref{holeqcond}, we identify $\CC_q$ with $\CC_{f(q)}$ in $C^\infty(X)\lmod$.)
By Lemma~\ref{lemma:Der_bij}, $(\wt{f^\bullet})_{\CC_q}$ is a bijection.
Hence $f$ is an open embedding.

$\mathrm{(vii)}\Longrightarrow\mathrm{(i)}$.
Without loss of generality, we may assume that $Y$ is an open subset of $X$.
Thus $f^\bullet\colon C^\infty(X)\to C^\infty(Y)$ is the restriction map.
A standard argument involving bump functions shows that the image
of $f^\bullet$ is dense in $C^\infty(Y)$. Hence $f^\bullet$ is an epimorphism.
By \cite[Theorem 2]{Og}, $C^\infty(Y)$ is projective\footnote[1]{In \cite{Og},
the projectivity of $C^\infty(Y)$ over $C^\infty(X)$ is proved under the assumption
that $Y$ is contained in a coordinate neighborhood. However, the proof readily carries over
to the general case.} over $C^\infty(X)$. This completes the proof.
\end{proof}

\section{Concluding remarks and questions}
\label{sect:rem}

An obvious difference between our main results, i.e., Theorems~\ref{holeqcond}
and \ref{BIGTHsmman}, is that the strong conditions (i)--(iii) of Theorem~\ref{BIGTHsmman}
are missing in Theorem~\ref{holeqcond}.
We have already mentioned in Section~\ref{sect:intro} that open embeddings
of Stein spaces usually do not satisfy condition (ii) (and, {\em a fortiori}, do not satisfy condition (i))
of Theorem~\ref{BIGTHsmman}. However, the situation with condition (iii) is not that clear.
In fact, we do not know the answer to the following question.

\begin{question}
\label{quest:strong}
Let $(X,\cO_X)$ be a Stein space, and let $(Y,\cO_Y)$ be a Stein open subspace of $(X,\cO_X)$.
Is the restriction map $\cO(X)\to\cO(Y)$ a strong homological epimorphism?
\end{question}

In the special case where $X=\CC^n$ and $Y$ is a polydomain (i.e., a product of one-dimensional
open subsets of $\CC$), the answer to Question~\ref{quest:strong} is positive
by \cite[Prop. 4.3]{T2}. On the other hand, the answer seems to be unknown already in the case
where $X=\CC^2$ and $Y$ is the open unit ball.

Another difference between Theorems~\ref{holeqcond} and \ref{BIGTHsmman}
is in the ``degree of singularity'' of the objects considered therein.
Indeed, Stein spaces are not necessarily reduced (i.e., their structure sheaves are
allowed to have nilpotents), and even reduced Stein spaces are not necessarily smooth
(i.e., are not necessarily locally isomorphic to an open subset of $\CC^n$).
On the other hand, $C^\infty$-manifolds are reduced and smooth (in the
appropriate sense) by definition.
The theory of $C^\infty$-differentiable spaces \cite{NaSa} studies geometric objects
which are more general than $C^\infty$-manifolds, and
which can be viewed as ``correct'' $C^\infty$-analogs of Stein spaces.
In particular, $C^\infty$-differentiable spaces may have singular points and may be non-reduced.
(Note that \cite{NaSa} deals with $\R$-valued functions only, but an extension
to $\CC$-valued functions is straightforward.)
It would be interesting to characterize
open embeddings of $C^\infty$-differentiable
spaces (at least in the affine case) in the spirit of Theorem~\ref{BIGTHsmman}.

In its full form, Theorem~\ref{BIGTHsmman} does not extend to
$C^\infty$-differentiable spaces. For example, consider the map
$\pi\colon C^\infty(\R^n)\to \CC[[x_1,\ldots,x_n]]$ which takes each
smooth function on $\R^n$ to its Taylor series at $0$.
Using the Koszul
resolution, one can prove that $\pi$ is a strong homological epimorphism
(cf. \cite[Prop. 4.4]{T2}). At the same time,
the corresponding map $\pi^*$ of affine $C^\infty$-differentiable spaces is
not an open embedding. On the other hand, applying $(-)\ptens{A}
\CC[[x_1,\ldots,x_n]]$ to the inclusion $I\hookrightarrow A$, where $A=C^\infty(\R^n)$
and $I=\{ f\in A : f(0)=0\}$, one can easily show that $\pi$ is
not flat.

The $C^\infty$-differentiable space with one-point spectrum that
corresponds to $\CC[[x_1,\ldots,x_n]]$ is a special case of
$\mathbf{W}_{Y/X}$, the so-called  Whitney subspace  of $Y$, where
$Y$ is a closed subset of a $C^\infty$-differentiable space $X$
\cite[Corollary 5.10]{NaSa}. In the case where $X$ is an open
subset of $\R^n$, the map $\mathbf{W}_{Y/X}\to X$ corresponds to
the quotient homomorphism $C^\infty(X)\to C^\infty(X)/W_{Y/X}$,
where $W_{Y/X}$ is the ideal of functions whose derivatives
of all orders vanish on $Y$. Normally, $\mathbf{W}_{Y/X}\to X$ is not
an open embedding (in fact, it is always a closed embedding).
Nevertheless, we have the following result.

\begin{prop}
Let $X$ be an open subset of $\R^n$, and let $Y$ be a closed subset of
$X$. Then the quotient map $C^\infty(X)\to C^\infty(X)/W_{Y/X}$ is
a $1$-pseudoflat epimorphism.
\end{prop}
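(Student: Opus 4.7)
Our plan is to apply Proposition~\ref{prop:quot_1psd} to the nuclear Fr\'echet algebra $A=C^\infty(X)$ and the closed ideal $I=W_{Y/X}$; this reduces the problem to showing that the multiplication map $\mu_I\colon W_{Y/X}\Ptens W_{Y/X}\to W_{Y/X}$ is surjective.

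To prove surjectivity, given $f\in W_{Y/X}$ we would represent $f$ as the image under $\mu_I$ of an absolutely convergent series $\sum_{k\ge 1}\chi_k\otimes(\tilde\chi_k f)$, built from a Whitney-type partition of unity on $X\setminus Y$. More precisely, starting from the Whitney cube decomposition of the open set $X\setminus Y\subset\R^n$ and grouping the cubes by their dyadic generation, we would obtain sequences $\chi_k,\tilde\chi_k\in C_c^\infty(X\setminus Y)$ with the following properties: $\sum_k\chi_k=1$ pointwise on $X\setminus Y$; $\tilde\chi_k\equiv 1$ on $\operatorname{supp}(\chi_k)$; $d(\operatorname{supp}(\tilde\chi_k),Y)\ge c\cdot 2^{-k}$ for a fixed $c>0$; and for each compact $K\subset X$ and each multi-index $\alpha$, the uniform bound $|\partial^\alpha\chi_k(x)|+|\partial^\alpha\tilde\chi_k(x)|\le C_{K,\alpha}\,2^{k|\alpha|}$ for $x\in K$. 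Since each $\chi_k$ and $\tilde\chi_k$ vanishes identically in a neighborhood of $Y$, both lie in $W_{Y/X}$.

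The convergence of this tensor series would rely on the \emph{infinite}-order vanishing of $f$ on $Y$: for every compact $K\subset X$, every multi-index $\beta$, and every $N\ge 0$, we have $|\partial^\beta f(x)|\le C_{K,\beta,N}\,d(x,Y)^N$ on $K$. Combining this with the support and derivative bounds on $\tilde\chi_k$ via Leibniz's rule would yield seminorm estimates $p_{K,m}(\tilde\chi_k f)\le C_{K,m,N}\,2^{-kN}$ for arbitrary $m$ and $N$, whereas $p_{K,m}(\chi_k)\lesssim 2^{km}$. Taking $N$ larger than the order of any given seminorm makes the bilinear sum $\sum_k p(\chi_k)\,q(\tilde\chi_k f)$ finite for every pair of continuous seminorms $p,q$ on $W_{Y/X}$, so $\sum_k\chi_k\otimes(\tilde\chi_k f)$ converges absolutely in $W_{Y/X}\Ptens W_{Y/X}$. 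Applying $\mu_I$ and using $\tilde\chi_k\equiv 1$ on $\operatorname{supp}(\chi_k)$ would give $\sum_k\chi_k f$, and the same estimate shows this series converges to $f$ in $C^\infty(X)$.

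The principal technical obstacle is the quantitative comparison between the derivative blow-up of the dyadic cutoffs and the flat decay of $f$ near $Y$: one must check that the infinite-order vanishing absorbs the factors $2^{k|\alpha|}$ simultaneously for every seminorm. This is precisely what distinguishes $W_{Y/X}$ from the ideal of functions vanishing only to some finite order on $Y$, for which the analogous factorization would fail.
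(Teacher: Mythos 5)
Your reduction is exactly the paper's: both proofs invoke Proposition~\ref{prop:quot_1psd} for the nuclear algebra $C^\infty(X)$ and the closed ideal $W_{Y/X}$, so everything hinges on the surjectivity of $\mu_I$. Where you diverge is in how that surjectivity is obtained. The paper simply cites Tougeron's Lemme~2.4, which gives the stronger statement that every real-valued $f\in W_{Y/X}$ factors as a single product $f=f_1f_2$ with $f_1,f_2\in W_{Y/X}$, so that $f=\mu_I(f_1\otimes f_2)$ is hit by an elementary tensor. You instead build a self-contained argument producing $f$ as $\mu_I$ of a convergent series of elementary tensors in $W_{Y/X}\Ptens W_{Y/X}$; this is weaker than Tougeron's factorization but is all that Proposition~\ref{prop:quot_1psd} requires, and it makes the analytic mechanism (flat decay of $f$ beating the $2^{k|\alpha|}$ blow-up of dyadic cutoffs) explicit rather than hidden in a citation. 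Your sketch is essentially sound, but two points deserve a word if you write it out. First, the estimate $|\partial^\beta f(x)|\le C_{K,\beta,N}\,d(x,Y)^N$ on a compact $K\subset X$ is not completely automatic for a general open $X$ and a set $Y$ that is only closed in $X$: one should argue that any sequence $x_j\in K$ with $d(x_j,Y)\to 0$ accumulates at a point $x_*\in\overline{Y}\cap X=Y$, choose a ball $B(x_*,2r)\subset X$, and Taylor-expand $\partial^\beta f$ at near-minimizing points $y_j\in Y\cap B(x_*,r)$ along segments contained in that ball. Second, the Whitney decomposition of $X\setminus Y$ is governed by the distance to $Y\cup(\R^n\setminus X)$, so high-generation cubes also accumulate at $\partial X$, where $f$ does not decay; this is harmless because for a fixed compact $K\subset X$ only finitely many generations can meet $K$ near $\partial X$, and for $2^{-k}\ll d(K,\R^n\setminus X)$ every generation-$k$ cube meeting $K$ is forced to lie at distance $\asymp 2^{-k}$ from $Y$ itself. (Also, $\chi_k$ and $\tilde\chi_k$ need not be compactly supported, since a generation may contain infinitely many cubes; what matters is only that they vanish on a neighborhood of $Y$ and hence lie in $W_{Y/X}$.) With these points addressed, your argument is a correct, more elementary alternative to the citation of Tougeron.
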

\begin{proof}
It follows from \cite[Lemme 2.4]{Tou} that any real-valued
function $f\in W_{Y/X}$ has the form $f=f_1f_2$, where $f_1$
and $f_2$ are again in $W_{Y/X}$. Since $C^\infty(X)$ is nuclear,
we can apply Proposition~\ref{prop:quot_1psd}.
\end{proof}

The above remarks lead naturally to the following
two questions on
$C^\infty$-differentiable spaces.

\begin{question}
Can open embeddings of affine $C^\infty$-differentiable spaces be
characterized in terms of projectivity or flatness, as in
Theorem~{\upshape\ref{BIGTHsmman}}?
\end{question}

\begin{question}
Let $(X,\cO_X)$ be an affine $C^\infty$-differentiable
space, and let $Y$ be a closed subset of $X$.
Is the quotient map $\cO(X)\to
\cO(\mathbf{W}_{Y/X})$ a $1$-pseudoflat epimorphism?
Is it a weak homological epimorphism?
Is it a strong homological epimorphism?
\end{question}

\begin{ackn}
The authors thank the referee for useful comments that significantly
improved the presentation.
\end{ackn}

\end{document}